\titleformat*{\subsection}{\Large\bfseries}
\titleformat*{\subsubsection}{\large\bfseries}
\titleformat*{\paragraph}{\large\bfseries}
\titleformat*{\subparagraph}{\large\bfseries}
\renewcommand{\@seccntformat}[1]{\csname the#1\endcsname. }
\renewenvironment{abstract}{%
    \if@twocolumn
      \section*{\abstractname}%
    \else 
      \begin{center}%
        {\bfseries \Large\abstractname\vspace{\z@}}
      \end{center}%
      \quotation
    \fi}
    {\if@twocolumn\else\endquotation\fi}
\theoremstyle{plain}
\newtheorem{thm}{Theorem}[section]
\newtheorem{prop}{Proposition}
\theoremstyle{definition}
\newtheorem{defn}[thm]{Definition}
\newtheorem{cor}[thm]{Corollary}
\newtheorem{qn}[thm]{Question}
\theoremstyle{numberfirst}
\newcommand{\N}{\mathbb{N}}
\newcommand{\F}{\wp_{fin}\left(\mathbb{N}\right)\setminus\{\emptyset\}}
\date{\vspace{-5ex}}
\begin{document}

\title{Polynomial extension of the Stronger Central Sets Theorem}

\author{ 
Sayan Goswami\\  \textit{sayangoswami@imsc.res.in}\footnote{The Institute of Mathematical Sciences, A CI of Homi Bhabha National Institute, CIT Campus, Taramani, Chennai 600113, India.}\\
\and
Lorenzo Luperi Baglini \\ 
\textit{lorenzo.luperi@unimi.it}
\footnote{Dipartimento di Matematica, Università di Milano, Via Saldini 50, 20133 Milano, Italy.}		
\and	
	Sourav Kanti Patra\\ \textit{spatraskpatra.math@dde.buruniv.ac.in}\footnote{ Department of Mathematics, Centre for Distance and Online Education, University of Burdwan, Burdwan-713104,West Bengal.}
	}

\maketitle

\begin{abstract}

In \cite{key-11} Furstenberg introduced the notion of central set and established his famous Central Sets Theorem. Since then, several improved versions of Furstenberg's result have been found. The strongest generalization has been published by De, Hindman and Strauss in \cite{key-5}, whilst a polynomial extension by Bergelson, Johnson and Moreira appeared in \cite{key-1}.

In this article, we will establish a polynomial extension of the stronger version of the central sets theorem, and we will discuss properties of the families of sets that this result leads to consider. 
\end{abstract}

\section{Introduction}

A core problem in Ramsey Theory over the naturals is the characterization of which families $\mathcal{F}$ of subsets of $\mathbb{N}$ are partition regular, \textit{i.e.} which families have the property that whenever $\mathbb{N}=\bigcup_{i=1}^r A_i$ is a finite partition of $\mathbb{N}$, at least one of the $A_i$'s belongs to $\mathcal{F}$. When the family $\mathcal{F}$ has the property that whenever any $A\in\mathcal{F}$ is finitely partitioned one of the pieces in the partition belongs to $\mathcal{F}$, the family is said to be strongly partition regular. 

Two fundamental and classical results in Ramsey theory state, respectively, that the family of sets that contain arbitrarily long arithmetic progressions (called AP-rich sets) and the family of sets that contain an infinite subset $X$ and all the finite sums of distinct elements of $X$, called IP-sets, are partition regular\footnote{It has been proven that they are indeed strongly partition regular.}. The first result is called van der Waerden's theorem \cite{key-8}, the latter is called Hindman's theorem \cite{key-9}.

In his seminal work \cite{key-11}, Furstenberg used methods and notions from topological dynamics to define the notion of the central set and provided a joint extension of both van der Waerden's and Hindman's theorems, known as the Central Sets Theorem.

\begin{thm} 
\label{cst} \textup{[Central Sets Theorem]}
Let $l\in\mathbb{N}$, and $A\subseteq\mathbb{N}$ be a central set. For
each $i\in\{1,2,\ldots,l\}$ let $\langle x_{i,m}\rangle_{m=1}^{\infty}$ be a sequence in $\mathbb{\mathbb{N}}$. Then there exists a sequence
$\langle b_{m}\rangle_{m=1}^{\infty}$ in $\mathbb{N}$ and $\langle K_{m}\rangle_{m=1}^{\infty}$
in $\wp_{fin}\left(\mathbb{N}\right)\setminus\{\emptyset\}$ such that
\begin{enumerate}
\item For each $m$, $\max K_{m}<\min K_{m+1}$ and
\item For each $i\in\{1,2,\ldots,l\}$ and $H\in\wp_{fin}\left(\mathbb{N}\right)\setminus\{\emptyset\}$,
$\sum_{m\in H}(b_{m}+\sum_{t\in K_{m}}x_{i,t})\in A$.
\end{enumerate}
\end{thm}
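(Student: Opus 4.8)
The plan is to move to the Stone--Čech compactification $\beta\mathbb{N}$ and argue with minimal idempotents, following the now-standard route. Recall that $A\subseteq\mathbb{N}$ is central exactly when $A\in p$ for some idempotent $p=p+p$ lying in the smallest two-sided ideal of $(\beta\mathbb{N},+)$; fix such a $p$ with $A\in p$. The basic tool is the idempotent lemma: for $B\in p$ the set $B^{\star}=\{x\in B:-x+B\in p\}$ is again in $p$, and $-x+B^{\star}\in p$ for every $x\in B^{\star}$. Hence $A^{\star}$, and any finite intersection of $A^{\star}$ with translates $-y+A^{\star}$ (for $y\in A^{\star}$), is still a member of $p$, in particular nonempty.

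I would then build $\langle b_{m}\rangle$ and $\langle K_{m}\rangle$ by a recursion on $n$, maintaining the invariant that $\sigma_{i}(H):=\sum_{m\in H}\bigl(b_{m}+\sum_{t\in K_{m}}x_{i,t}\bigr)\in A^{\star}$ for all $i\le l$ and all nonempty $H\subseteq\{1,\dots,n\}$, together with $\max K_{m}<\min K_{m+1}$; since $A^{\star}\subseteq A$ and every finite subset of $\mathbb{N}$ eventually lies in some $\{1,\dots,n\}$, this proves the theorem. For the step from $n$ to $n+1$ there are only finitely many values $\sigma_{i}(H)$ ($i\le l$, $\emptyset\neq H\subseteq\{1,\dots,n\}$), all in $A^{\star}$, so $E:=A^{\star}\cap\bigcap_{i,H}\bigl(-\sigma_{i}(H)+A^{\star}\bigr)$ belongs to $p$. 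If one can produce $b_{n+1}\in\mathbb{N}$ and a nonempty finite $K_{n+1}\subseteq\{\max K_{n}+1,\max K_{n}+2,\dots\}$ with $b_{n+1}+\sum_{t\in K_{n+1}}x_{i,t}\in E$ for every $i$, then for $H\ni n+1$ one has $\sigma_{i}(H)=\sigma_{i}(H\setminus\{n+1\})+\bigl(b_{n+1}+\sum_{t\in K_{n+1}}x_{i,t}\bigr)$ (omitting the first summand if $H=\{n+1\}$), which lies in $A^{\star}$ by the choice of $E$, while the case $H\subseteq\{1,\dots,n\}$ is the inductive hypothesis; this closes the recursion. (The base case $n=1$ is the same assertion with $E=A^{\star}$.)

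So the whole theorem reduces to the statement that for every $E\in p$, every $N\in\mathbb{N}$, and all sequences $\langle x_{i,m}\rangle_{m}$ ($i\le l$) there exist $b\in\mathbb{N}$ and a nonempty finite $K\subseteq(N,\infty)$ with $b+\sum_{t\in K}x_{i,t}\in E$ for all $i\le l$, and this is the part I expect to be the genuine obstacle. The tension is that the sequences are completely arbitrary, so one cannot force an individual term $x_{i,s}$ into a prescribed member of $p$, and yet the offset $b$ must serve all $l$ sequences simultaneously. The approach I would take is a secondary recursion that grows $K$ one index at a time, carrying a descending chain of members of $p$ kept nonempty by the idempotent lemma, and reconciling the $l$ sequences using that $E$, being a member of a minimal idempotent, is central and hence combinatorially rich: after first choosing $K$ so that the block sums $\sum_{t\in K}x_{i,t}$ all lie in a single residue class modulo a suitably chosen modulus — a pigeonhole on the partial sums of the associated finite sequence of residue vectors — the bounded spread left among the $l$ block sums can be absorbed inside $E$ by a translate supplied by the van der Waerden / idempotent structure of $E$ (alternatively one passes to the product semigroup $(\mathbb{N}^{l},+)$, where $A\times\cdots\times A$ is again central). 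Once this lemma is secured, the recursion above and its bookkeeping are routine.
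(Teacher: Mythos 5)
The paper does not prove Theorem~\ref{cst}; it is quoted as Furstenberg's classical result, so there is no internal proof to compare against. Judged on its own terms, your reduction is the standard one and is carried out correctly: fixing a minimal idempotent $p$ with $A\in p$, using the star lemma ($A^{\star}\in p$ and $-y+A^{\star}\in p$ for $y\in A^{\star}$), and running the recursion with $E=A^{\star}\cap\bigcap_{i,H}\bigl(-\sigma_{i}(H)+A^{\star}\bigr)$ is exactly how the bookkeeping is done in the literature, and your verification that $\sigma_{i}(H\cup\{n+1\})\in A^{\star}$ closes the induction is fine.

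The genuine gap is the lemma you yourself flag as the obstacle: that for every $E\in p$ and arbitrary sequences $\langle x_{i,m}\rangle_{m}$ there exist $b$ and a finite $K$ beyond any prescribed point with $b+\sum_{t\in K}x_{i,t}\in E$ for all $i\le l$. This is precisely the assertion that members of a minimal idempotent are $J$-sets (for finitely many sequences), and it is the entire combinatorial content of the Central Sets Theorem; everything else is routine. Neither of your two suggested routes works as stated. The residue-class pigeonhole does not leave a ``bounded spread'': forcing the $l$ block sums into one class modulo $M$ still allows them to differ by arbitrarily large multiples of $M$, so there is nothing for a van der Waerden translate to absorb. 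Passing to $(\mathbb{N}^{l},+)$ does not help either, because what is needed is a translate by a \emph{diagonal} element $(b,\dots,b)$ of an IP-sum of the vector sequence $(x_{1,t},\dots,x_{l,t})$ landing in $E^{l}$; centrality of $E^{l}$ (even granting it) gives translates by arbitrary elements of $\mathbb{N}^{l}$, not diagonal ones, so this is again exactly the $J$-set property in disguise. The known proofs of this step are nontrivial external inputs: Furstenberg's topological-dynamical argument, or the algebraic proof that piecewise syndetic sets are $J$-sets via the Hales--Jewett theorem (as in De--Hindman--Strauss and \cite{key-6}), or --- closest in spirit to this paper --- Theorem~\ref{recurrence} specialized to the identity polynomial, which rests on the IP-polynomial van der Waerden theorem of \cite{key-1}. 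Until you supply one of these, the proof is incomplete at its core.
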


The original definition of the notion of central set was dynamical; however, the following equivalent simpler ultrafilters\footnote{In this paper we assume the reader to know the basics of the algebra of $\beta\mathbb{N}$.} characterization was found in \cite{key-16}.

\begin{defn} $A\subseteq \mathbb{N}$ is central if $A$ belongs to a minimal idempotent $\mathcal{U}\in\beta\mathbb{N}$. \end{defn}

Several generalizations of Theorem \ref{cst} to semigroups have been found in the literature; for details, we refer to \cite{key-12}. As we are interested to provide a new general version of Theorem \ref{cst} for\footnote{Most of our proofs could easily be generalized to the case of countable abelian semigroups. } $\mathbb{N}$, we will recall the specification to $\mathbb{N}$ of some of these generalizations. 
At the best of our knowledge, the following result of De, Hindman and Strauss, published in \cite{key-5}, is the most general commutative version of the Central Sets theorem till date.

\begin{thm}
 \label{gcst} \textup{[Stronger Central Sets Theorem]} Let $\tau=\mathbb{N}^{\mathbb{N}}$ and let $C \subseteq S$ be central. There exists functions $\alpha : \wp_{fin}(\tau)\to \mathbb{N}$ and $H: \wp_{fin}(\tau) \to \wp_{fin}\left(\mathbb{N}\right)\setminus\{\emptyset\}$ such that 
\begin{enumerate}
\item \label{1.41}if $F,G \in \wp_{fin}(\tau)$ and $F \subsetneqq G$ then $\max H(F) < \min H(G)$, and 
\item \label{1.42}whenever $m \in \mathbb{N}$, $G_1, G_2, \ldots , G_m \in \wp_{fin}(\tau)$, $G_1 \subsetneq G_2 \subsetneq \cdots \subsetneq G_m$ and for each $i \in \{1,2, \ldots , m\}$, $f_i \in G_i$, one has $$\sum_{i=1}^{m}\big(\alpha(G_i)+\sum_{t\in H(G_i)}f_i(t)\big)\in C.$$
\end{enumerate}
\end{thm}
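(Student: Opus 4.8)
The plan is to prove the Stronger Central Sets Theorem by a standard ultrafilter argument in $\beta\mathbb{N}$, building the functions $\alpha$ and $H$ recursively along a well-ordering of $\wp_{fin}(\tau)$ compatible with inclusion. Since $C$ is central, fix a minimal idempotent $\mathcal{U}\in\beta\mathbb{N}$ with $C\in\mathcal{U}$, and let $C^{\star}=\{x\in C : -x+C\in\mathcal{U}\}$ be the associated ``central-shift'' set; recall the two standard facts that $C^{\star}\in\mathcal{U}$ and that for every $x\in C^{\star}$ one has $-x+C^{\star}\in\mathcal{U}$. The idea is that at each finite set $F\in\wp_{fin}(\tau)$ we will have a sum of the form $s(F)=\sum_{i=1}^{m}\big(\alpha(G_i)+\sum_{t\in H(G_i)}f_i(t)\big)$ coming from any chain $G_1\subsetneq\cdots\subsetneq G_m=F$ with $f_i\in G_i$; we arrange inductively that all such partial sums lie in $C^{\star}$, which is what makes the recursion go through.

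The key steps, in order, are as follows. First I would enumerate $\wp_{fin}(\tau)$ as $\langle F_{\sigma}\rangle$ along an ordinal, refining the order so that $F\subsetneq G$ implies $F$ comes before $G$ (this is possible since $\wp_{fin}(\tau)$ with $\subsetneq$ has all principal down-sets finite, hence one can process sets in order of cardinality, breaking ties arbitrarily). Second, at stage $F$, consider the finite collection $\mathcal{S}_F$ of all sums $\sum_{i=1}^{m}\big(\alpha(G_i)+\sum_{t\in H(G_i)}f_i(t)\big)$ arising from chains $G_1\subsetneq\cdots\subsetneq G_m\subsetneq F$ already handled, together with $0$; by induction hypothesis each such $s$ lies in $C^{\star}$, so $-s+C^{\star}\in\mathcal{U}$, and hence $B_F:=\bigcap_{s\in\mathcal{S}_F}(-s+C^{\star})\in\mathcal{U}$. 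Since $\mathcal{U}$ is idempotent, $B_F\cap(-x+B_F)$... more precisely we use that $B_F^{\star}:=\{x\in B_F : -x+B_F\in\mathcal{U}\}\in\mathcal{U}$; intersecting with the constraint $\max H(G)<\min H(F)$ coming from already-chosen $H(G)$ for $G\subsetneq F$ is automatic since we only need to pick $H(F)$ large. Third, pick $n$ larger than all previously used values of $\max H(G)$ for $G\subsetneq F$, and then, because $B_F^{\star}$ is an IP$^{\star}$-type set in $\mathcal{U}$, use the standard fact that for any sequence — here apply it to the finitely many functions $f\in F$ — one can find $\alpha(F)\in\mathbb{N}$ and a finite set $H(F)\subseteq\{n,n+1,\dots\}$ such that $\alpha(F)+\sum_{t\in H(F)}f(t)\in B_F^{\star}$ simultaneously for every $f\in F$. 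This simultaneous choice is exactly where one invokes (a finite instance of) the Central Sets Theorem machinery, or more elementarily the fact that a member of an idempotent contains, for any finite family of sequences, a common value of the shape ``constant plus a finite sum of a tail''; concretely one chooses $H(F)$ first so that the differences $f(t)-f'(t)$ behave, or one reduces to $l=|F|$ copies and uses that $B_F^{\star}\in\mathcal{U}$ with $\mathcal{U}$ idempotent gives $\FS$-structure.

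Having made these choices, one verifies condition (\ref{1.41}) directly from the choice of $n$ at each stage, and condition (\ref{1.42}) by a telescoping computation: given a chain $G_1\subsetneq\cdots\subsetneq G_m$ and $f_i\in G_i$, induct on $m$; the partial sum $s=\sum_{i=1}^{m-1}\big(\alpha(G_i)+\sum_{t\in H(G_i)}f_i(t)\big)$ lies in $C^{\star}$ by induction and belongs to $\mathcal{S}_{G_m}$, so $-s+C^{\star}\supseteq -s+B_{G_m}$... and since $\alpha(G_m)+\sum_{t\in H(G_m)}f_m(t)\in B_{G_m}^{\star}\subseteq -s+C^{\star}$ we get the full sum in $C^{\star}\subseteq C$. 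The main obstacle I anticipate is the simultaneous choice in the third step: extracting a single pair $(\alpha(F),H(F))$ that works for all $f\in F$ at once, rather than one function at a time. The clean way around it is to observe that this is precisely the finite version of the phenomenon the theorem generalizes — one applies the (already available) ordinary Central Sets Theorem, or directly the pigeonhole/idempotent argument, to the finite list $F=\{f_1,\dots,f_l\}$ of sequences to produce a common $b$ and common finite index block $K$ with $\max K$ as large as we wish, then set $\alpha(F)=b$ and $H(F)=K$; checking that $K$ can be pushed past $n$ is routine since the Central Sets Theorem produces the blocks $K_m$ unboundedly.
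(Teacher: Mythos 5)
The paper does not actually prove Theorem \ref{gcst}; it is quoted from De--Hindman--Strauss \cite{key-5}. Your proposal is correct and follows essentially the same scheme that both \cite{key-5} and this paper's proof of the polynomial analogue (Theorem \ref{main}) use: induction on $|F|$ (your well-ordering by cardinality is an equivalent packaging), the set $B_F=C^{\star}\cap\bigcap_{s}(-s+C^{\star})\in\mathcal{U}$ indexed over the finitely many partial sums from chains of proper subsets, and a simultaneous choice of $(\alpha(F),H(F))$ with $\min H(F)$ large landing all increments in $B_F$, followed by the telescoping verification. One caveat: of the several justifications you float for the simultaneous-choice step, only the appeal to the classical Central Sets Theorem (equivalently, the $J$-set property of members of a minimal idempotent) actually works --- idempotency of $\mathcal{U}$ alone gives finite-sums structure for some sequence generated from $B_F^{\star}$, not control over the prescribed sequences $f\in F$, so that alternative should be dropped.
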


We are interested to arrive to a nonlinear version of Theorem \ref{gcst}. Nonlinear versions of linear statements in combinatorics are usually much harder to obtain. They usually involve the set of polynomials with integer coefficients that vanish at 0 and send $\mathbb{N}$ into $\mathbb{N}$; we will denote this set by $\mathbb{P}$ from now onwards. 

The nonlinear version of van der Waerden's theorem, known as the Polynomial van der Waerden's Theorem\footnote{Actually, the authors proved a generalized version of the much stronger Sz\'emeredi's Theorem, but we are not going to discuss it in this paper.} was established by Bergelson and Liebman in \cite{key-9.1}, using the methods of topological dynamics and PET induction.

\begin{thm} \label{pvdw} 
\textup{[Polynomial van der Waerden theorem] \cite{key-9.1}}
Let $r\in \mathbb{N},$ and $\mathbb{N}=\bigcup_{i=1}^r C_i$ be a $r$-coloring of $\mathbb{N}$. Then for any $F\in \wp_{fin}(\mathbb{P})$, there exist $a,d\in \mathbb{N}$ such that for all $p\in F$, $a+p \left(d\right) \in C_j$,  for some $1\leq j\leq r$.
\end{thm}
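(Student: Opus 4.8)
The plan is to deduce Theorem~\ref{pvdw} from a topological multiple recurrence statement, following the original approach of Bergelson and Liebman. First I would apply the Furstenberg--Weiss correspondence principle: encode the colouring $\mathbb{N}=\bigcup_{i=1}^{r}C_{i}$ as the point $\omega\in\{1,\dots,r\}^{\mathbb{N}}$ with $\omega_{n}=i$ iff $n\in C_{i}$, take the shift $T$ on the orbit closure $X=\overline{\{T^{n}\omega:n\in\mathbb{N}\}}$, pass to a minimal subsystem, and observe that the cells $C_{i}$ correspond to a clopen cover $\{U_{i}\}$ of $X$. A monochromatic configuration $\{a\}\cup\{a+p(d):p\in F\}$ in some cell then amounts to a point $x$ in some $U_{j}$ together with $d\in\mathbb{N}$ such that $T^{p(d)}x\in U_{j}$ for all $p\in F$ (recall $p(d)\in\mathbb{N}$ since $p\in\mathbb{P}$, and, throwing the zero polynomial into $F$, also $a\in C_{j}$). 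So it suffices to prove the topological multiple recurrence statement: for every minimal system $(X,T)$ with $T$ a homeomorphism, every nonempty open $U\subseteq X$, and every finite $F\subseteq\mathbb{P}$, there exist $x\in U$ and $d\in\mathbb{N}$ with $T^{p(d)}x\in U$ for all $p\in F$.

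\textbf{The recurrence.} I would establish this recurrence by PET (``polynomial exhaustion technique'') induction. One attaches to a finite family $F\subseteq\mathbb{P}$ a complexity statistic --- essentially a vector recording, degree by degree, how many classes occur under the relation ``$p\sim q$ iff $\deg(p-q)$ is smaller than the degrees of $p$ and $q$'' --- ordered so that the operation below strictly lowers it. The base case, where all members of $F$ are equivalent to a single linear polynomial, is just (the topological form of) van der Waerden's theorem \cite{key-8}. The inductive step takes a member $q\in F$ of least degree, passes to the ``derived'' family $\partial_{q}F$ of polynomials $n\mapsto p(n+\ell)-q(\ell)$ (in the variable $n$, with $\ell$ a shift parameter), which has strictly smaller complexity, invokes the inductive hypothesis on $\partial_{q}F$ to get approximate simultaneous returns, and then runs a colour-focusing argument --- colouring a long block of admissible shifts $\ell$ by which small ball the relevant orbit point lands in and applying van der Waerden's theorem to that finite colouring --- to convert these into an exact return for $F$. (There is also Walters's purely combinatorial proof, which replaces the dynamics by a double induction on the number and the degrees of the polynomials, again driven by a Graham--Rothschild-style colour-focusing scheme; that would be an equally viable route.)

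\textbf{Main obstacle.} The technical heart, and where I expect the real work to lie, is the PET bookkeeping: one has to fix a complexity measure on finite subfamilies of $\mathbb{P}$ that is well-founded, is minimised precisely on the families directly covered by van der Waerden's theorem, and provably decreases under the differencing-and-shift operation. The delicate point is that several polynomials of the same degree genuinely interact, and the derived polynomials can have coinciding or cancelling leading terms, so a crude measure such as the maximal degree will not close the induction; getting the right statistic --- and then matching the $\varepsilon$-scales across the colour-focusing step so that approximate returns really do glue into an exact monochromatic configuration --- is the crux. Once the recurrence statement is in hand, reading it back through the correspondence principle to obtain the stated partition result is routine.
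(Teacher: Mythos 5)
The paper does not prove Theorem \ref{pvdw} at all: it is imported verbatim from Bergelson and Leibman \cite{key-9.1}, so there is no in-paper argument to compare against. Your outline follows exactly the route of that cited source --- the correspondence principle reducing the partition statement to topological multiple recurrence for minimal systems, then PET induction with a colour-focusing step (or Walters's combinatorial variant). The reduction half of your sketch is correct and essentially complete (and the aside about adjoining the zero polynomial is unnecessary, since the statement does not require $a\in C_j$); the recurrence half, however, is a roadmap rather than a proof: the well-founded weight vector, the verification that it strictly decreases under the differencing operation, and the matching of scales in the colour-focusing argument are correctly identified as the crux but not carried out. Since the paper itself treats the theorem as a black box, this is a faithful account of the standard proof strategy, but as written it would not stand alone without executing the PET induction.
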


A natural question arises whether one can provide a joint extension of Polynomial van der Waerden's and Hindman's theorems. This was established by Bergelson, Johnson and Moreira in \cite{key-1}, and it is now known as the Polynomial Central Sets Theorem. Whilst their version of the Theorem involves arbitrary countable groups, we will specialize it here to $\mathbb{N}$; before stating it, we need to recall some definitions from \cite{key-1}.


\begin{defn} \label{reqdefn} 

\begin{enumerate} Let $p\in\beta\mathbb{N}$ be an ultrafilter and let $\Gamma\subseteq\mathbb{N}^{\mathbb{N}}$. 

\item \label{R-f}($R$-family)  We say that $\Gamma$ is an $R$-family with respect to $p$ if for every finite set $F\subseteq\Gamma$, every $A\in p$ and every IP-set $\left\langle y_{\alpha}\right\rangle _{\alpha\in \wp_{fin}\left(\mathbb{N}\right)}$, there exist $x\in \mathbb{N}$ and $\alpha\in F$ such that $x+f\left(y_{\alpha}\right)\in A,\,\forall f\in F$.

\item \label{licit} (Licit) We say that $\Gamma$ is licit if for any $f\in\Gamma$ and any $z\in \mathbb{N}$, there exists a function $\phi_{z}\in\Gamma$ such that $f\left(y+z\right)=\phi_{z}\left(y\right)+f\left(z\right)$.

\item \label{IP-regular} An endomorphism $c\in \mathbb{N}\rightarrow \mathbb{N}$ is called $IP$-regular
if for every $IP$-set $\left\langle x_{\alpha}\right\rangle _{\alpha\in\wp_{fin}\left(\mathbb{N}\right)}$ there exists an IP-set $\left\langle y_{\alpha}\right\rangle _{\alpha\in\wp_{fin}\left(\mathbb{N}\right)}$ such that $\left\langle c\left(y_{\alpha}\right)\right\rangle _{\alpha\in\wp_{fin}\left(\mathbb{N}\right)}$
is a sub-IP-set of $\left\langle x_{\alpha}\right\rangle _{\alpha\in\wp_{fin}\left(\mathbb{N}\right)}$.
\end{enumerate}
\end{defn}

Specialized to $\mathbb{N}$, the Polynomial Central Sets Theorem reads as follows.

\begin{thm} 
 \label{PCST} \textup{[Polynomial Central Sets Theorem]} Let $p\in\beta \mathbb{N}$ be an idempotent
ultrafilter, let $\Gamma\subseteq\mathbb{N}^{\mathbb{N}}$ be an $R$-family with respect to $p$
which is licit. Then for any finite set $F\subseteq\Gamma$, any $A\in p$
and any $IP$-set $\langle y_{\alpha}\rangle_{\alpha\in \wp_{fin}\left(\mathbb{N}\right)}$, there exist a sub-$IP$-set $\langle z_{\beta}\rangle _{\beta\in \wp_{fin}\left(\mathbb{N}\right)}$ of $\langle y_{\alpha}\rangle_{\alpha\in \wp_{fin}\left(\mathbb{N}\right)}$
and an $IP$-set $\langle x_{\beta}\rangle_{\beta\in \wp_{fin}\left(\mathbb{N}\right)}$ such that for all $f\in F$ and for all $\beta\in\wp_{fin}\left(\mathbb{N}\right)$,
\[
x_{\beta}+f\left(y_{\beta}\right)\in A.
\]
\end{thm}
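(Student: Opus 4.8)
The plan is to run the standard $\beta\mathbb{N}$-construction that underlies every Central Sets Theorem, using licitness to linearise the nonlinear terms and the $R$-family hypothesis for the one-step combinatorics. Fix a finite $F\subseteq\Gamma$, a set $A\in p$, and the IP-set $\langle y_\alpha\rangle_{\alpha\in\wp_{fin}(\mathbb{N})}$. Since $p=p+p$, I would first replace $A$ by its idempotent-stable refinement $A^\star=\{n\in A:-n+A\in p\}$, which again belongs to $p$ and satisfies $n\in A^\star\Rightarrow -n+A^\star\in p$; the entire construction will live inside $A^\star$.

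Next I would build, by induction on $n\in\mathbb{N}$, elements $x_n\in\mathbb{N}$ and finite blocks $H_n\subseteq\mathbb{N}$ with $\max H_n<\min H_{n+1}$, and then set $x_\beta=\sum_{n\in\beta}x_n$ and $z_\beta=y_{\bigcup_{n\in\beta}H_n}=\sum_{n\in\beta}y_{H_n}$ for $\emptyset\neq\beta\in\wp_{fin}(\mathbb{N})$. Then $\langle z_\beta\rangle$ is a sub-IP-set of $\langle y_\alpha\rangle$ and $\langle x_\beta\rangle$ is an IP-set, so it suffices to guarantee $x_\beta+f(z_\beta)\in A^\star$ for all $f\in F$ and all $\beta$, which is the inductive hypothesis for all $\beta\subseteq\{1,\dots,n-1\}$ at stage $n$. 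The algebraic heart of the step is that, because $\bigcup_{m\in\beta}H_m$ and $H_n$ are disjoint blocks, $z_{\beta\cup\{n\}}=z_\beta+y_{H_n}$, so licitness applied with $z=z_\beta$ yields $f(z_{\beta\cup\{n\}})=f(y_{H_n}+z_\beta)=\phi_{z_\beta}^{(f)}(y_{H_n})+f(z_\beta)$ for some $\phi_{z_\beta}^{(f)}\in\Gamma$ depending only on the already-chosen $z_\beta$ and on $f$ (with $\phi_0^{(f)}=f$). Consequently
\[
x_{\beta\cup\{n\}}+f(z_{\beta\cup\{n\}})=\bigl(x_\beta+f(z_\beta)\bigr)+\bigl(x_n+\phi_{z_\beta}^{(f)}(y_{H_n})\bigr).
\]

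At stage $n$ I would therefore set $G_n=F\cup\{\phi_{z_\beta}^{(f)}:\emptyset\neq\beta\subseteq\{1,\dots,n-1\},\ f\in F\}$, a finite subset of $\Gamma$, and
\[
B_n=A^\star\cap\bigcap\bigl\{-(x_\beta+f(z_\beta))+A^\star:\emptyset\neq\beta\subseteq\{1,\dots,n-1\},\ f\in F\bigr\},
\]
which lies in $p$ since every $x_\beta+f(z_\beta)$ already belongs to $A^\star$. Applying the $R$-family property to $G_n\subseteq\Gamma$, to $B_n\in p$, and to the tail IP-set $\langle y_\alpha\rangle_{\min\alpha>\max H_{n-1}}$ yields $x_n\in\mathbb{N}$ and a block $H_n$ with $\min H_n>\max H_{n-1}$ such that $x_n+g(y_{H_n})\in B_n$ for every $g\in G_n$. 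Taking $g=f\in F$ gives $x_{\{n\}}+f(z_{\{n\}})=x_n+f(y_{H_n})\in A^\star$, which handles $\beta=\emptyset$; taking $g=\phi_{z_\beta}^{(f)}$ puts $x_n+\phi_{z_\beta}^{(f)}(y_{H_n})$ into $-(x_\beta+f(z_\beta))+A^\star$, so the displayed identity gives $x_{\beta\cup\{n\}}+f(z_{\beta\cup\{n\}})\in A^\star$. This closes the induction, and $A^\star\subseteq A$ finishes the proof, with $\langle z_\beta\rangle$ playing the role of the family written $\langle y_\beta\rangle$ in the statement.

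The one step that needs genuine thought is this linearisation together with its bookkeeping: one must check that each auxiliary function $\phi_{z_\beta}^{(f)}$ produced by licitness again lies in $\Gamma$, and that at each stage only finitely many such functions appear (one per pair consisting of a nonempty $\beta\subseteq\{1,\dots,n-1\}$ and an $f\in F$), so that the $R$-family property, which only processes finite subsets of $\Gamma$, can be invoked. Everything else — the properties of the star-set, the disjoint-block arithmetic for IP-sets, and restricting to a tail sub-IP-set to force $\min H_n>\max H_{n-1}$ — is routine.
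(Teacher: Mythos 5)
Your proof is correct. Note, however, that the paper does not prove Theorem \ref{PCST} at all: it is quoted from Bergelson--Johnson--Moreira \cite{key-1} as background, so there is no in-paper argument to compare against. That said, your argument is the standard one for this result, and it is essentially the same inductive scheme the authors use to prove their own main Theorem \ref{main}: pass to the star-set $A^{\star}$ of the idempotent, linearise the increment $f(z_{\beta\cup\{n\}})-f(z_\beta)$ via an auxiliary member of $\Gamma$ (licitness here, the explicit polynomials $\phi^{P,n}$ there), collect the finitely many resulting functions and the finitely many translates $-(x_\beta+f(z_\beta))+A^{\star}$ into one set of $p$, and invoke the one-step recurrence hypothesis ($R$-family here, the $J_p$-property via Theorem \ref{recurrence} there) on a tail of the given IP-set. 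Your handling of the two points that need care --- that licitness returns functions lying again in $\Gamma$ so the $R$-family property applies, and that applying it to the tail IP-set forces $\min H_n>\max H_{n-1}$ --- is sound, and your closing remark correctly identifies the $f(y_\beta)$ in the paper's statement as standing for $f(z_\beta)$.
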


Inspired by the above result, our goal in this paper is to provide a polynomial extension, in the flavour of Theorem \ref{PCST}, of the Stronger Central Sets Theorem \ref{gcst}. This will be done in Section 2, where new special classes of sets related to our result, called $J_{p}-$ and $C_{p}-$sets, will be introduced. In Section 3, we will provide equivalent characterizations of the notions of $J_{p}-,C_{p}-$sets in terms of nonstandard analysis. Finally, in Section 4 we will discuss some open problems that arise as consequences of our main result.

\section{Polynomial extension of the stronger Central Sets Theorem}

Since for any $j\in \mathbb{N},$ $\mathbb{N}^j$ is piecewise syndetic in $\mathbb{Z}^j$,  it follows from \cite[Corollary 2.3]{key-13} that any set $A\subseteq \mathbb{N}$ which is piecewise syndetic in $\mathbb{N}$, is also piecewise syndetic in $\mathbb{Z}$.
Hence the following version of the IP-Polynomial Van der Waerden's theorem (that we specialize here to $\mathbb{N}$) is a special case of \cite[Corollary 2.12]{key-1}. 

\begin{thm}\label{BJM}
\textup{(Abstract IP-Polynomial van der Waerden theorem)}
Let $j\in \mathbb{N} $ and  $A\subseteq \mathbb{N}$ be a piecewise syndetic set. Then for any finite set of polynomials $F$ from $\mathbb{N}^j$ to $\mathbb{N}$ \footnote{Polynomials from  $\mathbb{N}^j$ to $\mathbb{N}$ are multidimensional polynomials.} and any IP-set $\left(x_{\alpha}\right)_{\alpha\in\wp_{fin}(\mathbb{N})}$,  there exists  $a\in \mathbb{N}$ and $\beta \in \wp_{fin}\left(\mathbb{N}\right)\setminus\{\emptyset\}$ such that 

\[
a+f(x_{\beta})\in A
\]
for all $\beta\in\wp_{fin}\left(\mathbb{N}\right)\setminus\{\emptyset\},f\in F$.
\end{thm}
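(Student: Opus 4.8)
The plan is to derive the statement straight from the abstract IP-polynomial van der Waerden theorem for countable commutative groups of Bergelson, Johnson and Moreira, namely \cite[Corollary 2.12]{key-1}, by a descent from $\mathbb{Z}$ (and $\mathbb{Z}^{j}$) to $\mathbb{N}$ (and $\mathbb{N}^{j}$). That corollary applies to piecewise syndetic subsets of $\mathbb{Z}$, finitely many polynomial mappings, and IP-sets valued in $\mathbb{Z}^{j}$; so essentially all there is to do is to verify that the $\mathbb{N}$-data in the statement feed legitimately into it, and then to transport the conclusion back to $\mathbb{N}$.

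The one slightly non-formal step is the transfer of piecewise syndeticity. As recalled immediately before the statement, $\mathbb{N}^{j}$ is piecewise syndetic in $\mathbb{Z}^{j}$, whence by \cite[Corollary 2.3]{key-13} the set $A \subseteq \mathbb{N}$, being piecewise syndetic in $\mathbb{N}$, is piecewise syndetic in $\mathbb{Z}$. The remaining data transfer trivially: the IP-set $(x_{\alpha})_{\alpha\in\wp_{fin}(\mathbb{N})}$, valued in $\mathbb{N}^{j}$, is a fortiori an IP-set of $\mathbb{Z}^{j}$, and each $f\in F$, having integer coefficients, extends verbatim to a polynomial map $\mathbb{Z}^{j}\to\mathbb{Z}$. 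Invoking \cite[Corollary 2.12]{key-1} over $\mathbb{Z}$ with this data then produces $a\in\mathbb{Z}$ and $\beta\in\wp_{fin}(\mathbb{N})\setminus\{\emptyset\}$ with $a+f(x_{\beta})\in A$ for every $f\in F$, which is the assertion up to the location of the shift.

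It remains only to check that $a$ can be taken in $\mathbb{N}$ rather than merely in $\mathbb{Z}$. This is the kind of positivity refinement that is routine for van der Waerden type statements, and the cleanest way to see it is to observe that the $\beta\mathbb{N}$-algebraic proof underpinning \cite[Corollary 2.12]{key-1} goes through word for word over $\mathbb{N}$: piecewise syndeticity of $A$ in $\mathbb{N}$ together with the fact that the IP-set lies in $\mathbb{N}^{j}$ keep all the relevant ultrafilter combinatorics inside $\beta\mathbb{N}$, so one extracts the shift directly in $\mathbb{N}$; the detour through $\mathbb{Z}$ above is then merely a convenient device for quoting \cite[Corollary 2.12]{key-1} verbatim. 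I would not expect any genuine obstacle along the way: the substantive content is imported wholesale from \cite{key-1}, and within the present deduction the only two points requiring a line of justification are the upward transfer of piecewise syndeticity, supplied by \cite[Corollary 2.3]{key-13}, and the positivity of the shift just discussed.
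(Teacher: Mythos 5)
Your proposal takes essentially the same route as the paper, which offers no separate proof but justifies the theorem in the preceding paragraph exactly as you do: transfer piecewise syndeticity from $\mathbb{N}$ to $\mathbb{Z}$ via \cite[Corollary 2.3]{key-13} and then quote \cite[Corollary 2.12]{key-1}. You are in fact slightly more careful than the paper, which silently asserts $a\in\mathbb{N}$ without addressing the positivity of the shift that you discuss in your final paragraph.
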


The following simple consequence of the above theorem will motivate us to introduce the notion of $J_p$ set.


\begin{thm}
\label{recurrence} Let $l\in \mathbb{N}$ and $A\subset \mathbb{N}$ be a piecewise syndetic set. For $i=1,2,\ldots,l$, let $\left( x_{\alpha}^i \right)_{\alpha\in\F}$ be an IP-set. Then for all finite $F\in \wp_{fin}(\mathbb{P})$, there exist $a\in \mathbb{N}$, $\beta \in \wp_{fin}\left(\mathbb{N}\right)\setminus\{\emptyset\}$ such that 
\[
a+P(x^i_\beta)\in A
\]
for all $i\in \{1,2,\ldots,l\}$ and $P\in F$.
\end{thm}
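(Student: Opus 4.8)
The plan is to reduce Theorem \ref{recurrence} to the multidimensional statement of Theorem \ref{BJM} by bundling the $l$ given IP-sets into a single IP-set living in $\mathbb{N}^{l}$. First I would write each of the IP-sets in generated form: for $i\in\{1,\dots,l\}$ fix a sequence $\langle y^{i}_{n}\rangle_{n=1}^{\infty}$ in $\mathbb{N}$ with $x^{i}_{\alpha}=\sum_{n\in\alpha}y^{i}_{n}$ for every $\alpha\in\F$. Then the sequence of $l$-tuples $\langle(y^{1}_{n},\dots,y^{l}_{n})\rangle_{n=1}^{\infty}$ in $\mathbb{N}^{l}$ generates the IP-set $\big(\,(x^{1}_{\alpha},\dots,x^{l}_{\alpha})\,\big)_{\alpha\in\F}$, since $(x^{1}_{\alpha},\dots,x^{l}_{\alpha})=\sum_{n\in\alpha}(y^{1}_{n},\dots,y^{l}_{n})$; this is the IP-set to which Theorem \ref{BJM} will be applied, with $j=l$.

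Next I would turn the single-variable polynomials of $F$ into multidimensional polynomials in $l$ variables. For each $P\in F$ and each $i\in\{1,\dots,l\}$ define $Q_{P,i}\colon\mathbb{N}^{l}\to\mathbb{N}$ by $Q_{P,i}(t_{1},\dots,t_{l})=P(t_{i})$. Since $P\in\mathbb{P}$ maps $\mathbb{N}$ into $\mathbb{N}$ (and vanishes at $0$), each $Q_{P,i}$ is a bona fide polynomial from $\mathbb{N}^{l}$ to $\mathbb{N}$, and the collection $\widetilde{F}=\{Q_{P,i}:P\in F,\ i\in\{1,\dots,l\}\}$ is finite, of size at most $l\cdot|F|$. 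So $\widetilde{F}$ is exactly the sort of finite family of multidimensional polynomials covered by Theorem \ref{BJM}.

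Finally I would invoke Theorem \ref{BJM} applied to the piecewise syndetic set $A$, the finite family $\widetilde{F}$, and the IP-set $\big((x^{1}_{\alpha},\dots,x^{l}_{\alpha})\big)_{\alpha\in\F}$. This yields $a\in\mathbb{N}$ and $\beta\in\wp_{fin}(\mathbb{N})\setminus\{\emptyset\}$ such that $a+Q_{P,i}(x^{1}_{\beta},\dots,x^{l}_{\beta})\in A$ for every $Q_{P,i}\in\widetilde{F}$; unwinding the definition of $Q_{P,i}$, this is precisely $a+P(x^{i}_{\beta})\in A$ for all $P\in F$ and all $i\in\{1,\dots,l\}$, which is the assertion. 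The only things that require care are checking that the diagonal bundling genuinely produces an IP-set in $\mathbb{N}^{l}$ and that each $Q_{P,i}$ lies in the class of polynomials to which Theorem \ref{BJM} applies; once these are observed, there is no real obstacle, since the crucial feature — that a single $a$ and a single $\beta$ work simultaneously for the entire finite family $\widetilde{F}$ — is already built into Theorem \ref{BJM}.
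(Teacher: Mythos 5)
Your proposal is correct and is essentially identical to the paper's own proof: both bundle the $l$ IP-sets into the diagonal IP-set $\big((x^{1}_{\alpha},\dots,x^{l}_{\alpha})\big)_{\alpha}$ in $\mathbb{N}^{l}$, form the finite family of coordinate-composed polynomials $P(t_{i})$, and apply Theorem \ref{BJM} with $j=l$. Your version just spells out slightly more carefully why the bundled sequence is an IP-set and why each $Q_{P,i}$ is admissible.
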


\begin{proof}
Consider the IP set $\left( x_\alpha^1, x_\alpha^2,\ldots,x_\alpha^l \right)_{\alpha}$ in $\mathbb{N}^l$. For $i\in \{1,2,\ldots,l\}, P\in F$ define $f_P^i(x_1,\ldots,x_l):=P(x_i),$
and let $F_{i}=\{f_{P}^{i}\left(x_{1},\ldots,x_{l}\right)\mid P\in F\}$. Finally, let $G=\bigcup_i F_i$. Then $G$ is a finite set of polynomials from $\mathbb{N}^l$ to $\mathbb{N}$ that vanish at $0$. Applying Theorem \ref{BJM} we find $a\in \mathbb{N}$ and $\beta \in \wp_{fin}\left(\mathbb{N}\right)\setminus\{\emptyset\}$ such that
\[
a+f_{P}^i\left(x_\beta^1, x_\beta^2,\ldots,x_\beta^l\right)=a+P\left(x_\beta^i\right)\in A
\]
for all $i\in \{1,2,\ldots,l\}$ and $P\in F$, as desired.
\end{proof}



Theorem \ref{recurrence} leads to strengthen polynomially the notion of $J$-set (that we recall) and to introduce that of $J_{p}$-set.

\begin{defn}
A set $A\subseteq\mathbb{N}$ is called a
$J$-set if for all $l\in\mathbb{N}$ and
for all IP-sets $(x_{\alpha}^i)_{\alpha\in\wp_{fin}\left(\mathbb{N}\right)\setminus\{\emptyset\}}, i=1,\ldots,l,$ there exist $a\in\mathbb{N}\cup\{0\}$ and $\beta\in\wp_{fin}\left(\mathbb{N}\right)\setminus\{\emptyset\}$
such that 
\[
a+\left(x_{\beta}^i\right)\in A
\]
for all $i\in \{1,2,\ldots,l\}$.

A set $A\subseteq\mathbb{N}$ is called a
$J_{p}$-set if for all finite $F\subset\mathbb{P}$, for all $l\in\mathbb{N}$ and
for all IP-sets $(x_{\alpha}^i)_{\alpha\in\wp_{fin}\left(\mathbb{N}\right)\setminus\{\emptyset\}}, i=1,\ldots,l,$ there exist $a\in\mathbb{N}\cup\{0\}$ and $\beta\in\wp_{fin}\left(\mathbb{N}\right)\setminus\{\emptyset\}$
such that 
\[
a+P\left(x_{\beta}^i\right)\in A
\]
for all $P\in F$ and $i\in \{1,2,\ldots,l\}$.

\end{defn}

By the definition, it trivially holds that a $J_{p}$ set is a $J$ set. We discuss the converse in Section 4.

The family of $J_{p}$ set is actually quite rich. To prove this, let us recall the notion of (upper) Banach density:

\begin{defn} A set $A\subseteq \mathbb{N}$ has positive (upper) Banach density if \[\limsup_{n\rightarrow +\infty} \max_{m\in\mathbb{N}}  \dfrac{\vert A\cap [m+1,\dots,m+n] \vert}{\vert  n \vert}>0.\]\end{defn}

The following result, known as the Multidimensional IP polynomial Szemer\'{e}di theorem (which, once again, we state only for $\mathbb{N}$), will entail that sets with positive Banach density are $J_{p}$-sets.

\begin{thm}\label{n}
Let $B\subseteq \mathbb{N}$ have positive upper Banach density. Let $\langle y_{\alpha} \rangle_{\alpha \in \wp_{fin}\left(\mathbb{N}\right)\setminus\{\emptyset\}}$ be an IP-set. For any finite family $F\subset \mathbb{P}$ there exists $x\in \mathbb{N}^n$ and $\alpha \in \wp_{fin}\left(\mathbb{N}\right)\setminus\{\emptyset\}$ such that $x+f(y_\alpha)\in B$ for all $f\in F.$
\end{thm}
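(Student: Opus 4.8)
The plan is to deduce Theorem~\ref{n} from the Multidimensional IP Polynomial Szemer\'edi theorem of Bergelson and McCutcheon (or, equivalently, from the polynomial extensions of Furstenberg's correspondence principle already available in the literature). The statement as written is essentially a combinatorial reformulation of that deep ergodic result, so the work lies entirely in setting up the correspondence correctly and reading off the conclusion; no new recurrence phenomenon needs to be proved here.

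First I would invoke Furstenberg's correspondence principle in its measure-preserving $\mathbb{Z}$-action form to pass from the set $B\subseteq\mathbb{N}$ of positive upper Banach density to a probability measure preserving system $(X,\mathcal{B},\mu,T)$ together with a set $\tilde B\in\mathcal{B}$ with $\mu(\tilde B)>0$ such that, for every finite collection of integers $n_1,\dots,n_k$,
\[
\overline{d}\Big(\{m\in\mathbb{N}: m+n_1\in B,\dots,m+n_k\in B\}\Big)\ \ge\ \mu\big(T^{-n_1}\tilde B\cap\cdots\cap T^{-n_k}\tilde B\big).
\]
Next I would apply the IP version of the Polynomial Szemer\'edi theorem to the commuting (here all powers of a single $T$) transformations $T^{p}$, $p\in F$: for the IP-set $\langle y_\alpha\rangle$ one obtains a nonempty $\alpha\in\wp_{fin}(\mathbb{N})\setminus\{\emptyset\}$ for which
\[
\mu\Big(\bigcap_{p\in F} T^{-p(y_\alpha)}\tilde B\Big)>0.
\]
Here one must be slightly careful: the polynomials in $\mathbb{P}$ are multidimensional (they act on $\mathbb{N}^n$), so $y_\alpha$ should be read as a point of $\mathbb{N}^n$ coming from an IP-set in $\mathbb{N}^n$, and one feeds the multidimensional polynomial family into the multidimensional IP polynomial Szemer\'edi theorem; the commuting transformations are then $T_1,\dots,T_n$ generated by the same $T$ along the $n$ coordinate directions. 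Combining the two displayed facts, the density of $\{m: m+p(y_\alpha)\in B\ \forall p\in F\}$ is positive, hence this set is nonempty; picking any $m=x$ in it gives the assertion $x+f(y_\alpha)\in B$ for all $f\in F$.

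The main obstacle, such as it is, is bookkeeping rather than mathematics: one has to match the hypotheses of the cited IP polynomial Szemer\'edi theorem (which is stated for finitely many commuting measure preserving transformations and polynomials with zero constant term mapping into the appropriate lattice) to the single-transformation, one-dimensional-target situation of Theorem~\ref{n}, and to check that ``IP-set in $\mathbb{N}^n$'' in the ergodic statement corresponds to the combinatorial IP-set $\langle y_\alpha\rangle$ used here. Once the correspondence principle and the IP polynomial Szemer\'edi theorem are both in hand, the deduction is immediate, and the vanishing-at-zero condition on $\mathbb{P}$ guarantees that the identity (the $\alpha$-independent term) behaves correctly so that the recurrence statement translates verbatim into the claimed shift $x+f(y_\alpha)$.
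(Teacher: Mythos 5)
The paper does not actually prove Theorem~\ref{n}: it is imported as a known result (the combinatorial form of the multidimensional IP polynomial Szemer\'edi theorem of Bergelson and McCutcheon), stated without proof and then used as a black box to derive Corollary~\ref{recurrence1}. Your proposal therefore supplies an argument where the paper supplies none, and the argument you give is the standard and correct derivation: Furstenberg's correspondence principle produces $(X,\mathcal{B},\mu,T)$ and $\tilde B$ with $\mu(\tilde B)>0$ and $d^{*}\bigl(\{m: m+n_1,\dots,m+n_k\in B\}\bigr)\ge\mu\bigl(T^{-n_1}\tilde B\cap\cdots\cap T^{-n_k}\tilde B\bigr)$, the ergodic IP polynomial Szemer\'edi theorem applied to the IP-system $\langle y_\alpha\rangle$ and the family $\{T^{p}\}_{p\in F}$ yields an $\alpha$ with $\mu\bigl(\bigcap_{p\in F}T^{-p(y_\alpha)}\tilde B\bigr)>0$, and nonemptiness of the corresponding set of shifts $m$ gives $x$. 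Two small corrections to your bookkeeping remarks: in this paper $\mathbb{P}$ consists of single-variable polynomials vanishing at $0$ (so $y_\alpha\in\mathbb{N}$ and a single transformation $T$ suffices; the ``$x\in\mathbb{N}^n$'' in the statement is evidently a typo for $x\in\mathbb{N}$), and the multidimensional packaging only enters later, in Corollary~\ref{recurrence1}, where $l$ IP-sets are bundled into one IP-set in $\mathbb{N}^l$. Since the heavy lifting in your sketch is entirely delegated to the cited ergodic theorem, your proof is only as complete as that citation, which is exactly the position the paper itself takes; as a self-contained derivation it is sound in outline.
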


The proof of the following corollary is verbatim the proof of theorem \ref{recurrence}, where one has to apply theorem \ref{n} instead of theorem \ref{BJM} with $j=l$. Hence we omit the proof.

\begin{cor}
\label{recurrence1} Let $l\in \mathbb N$ and $A\subset \mathbb{Z}$ have positive upper banach density. For each $i=1,2,\ldots,l$, let $\left( x_{\alpha}^i \right)_{\alpha\in\F}$ be an IP-set. Then for all finite $F\in \wp_{fin}(\mathbb{P})$, there exist $a\in \mathbb{N}$, $\beta \in \wp_{fin}\left(\mathbb{N}\right)\setminus\{\emptyset\}$ such that 
\[
a+P(x^i_\beta)\in A
\]
for all $i\in \{1,2,\ldots,l\}$ and $P\in F$.
\end{cor}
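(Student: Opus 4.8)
The plan is to mimic the proof of Theorem~\ref{recurrence} almost word for word, substituting Theorem~\ref{n} for Theorem~\ref{BJM}. First I would bundle the $l$ given IP-sets into a single IP-set $\big((x_\alpha^1,\dots,x_\alpha^l)\big)_{\alpha\in\F}$ taking values in $\mathbb{Z}^l$ (equivalently $\mathbb{N}^l$, according to where the $x_\alpha^i$ live). Then, for each $i\in\{1,\dots,l\}$ and each $P\in F$, I would introduce the ``projection-composed'' polynomial $f_P^i(x_1,\dots,x_l):=P(x_i)$; this is a polynomial from $\mathbb{Z}^l$ to $\mathbb{Z}$ that vanishes at the origin, since $P\in\mathbb{P}$ vanishes at $0$. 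Collecting these, put $F_i=\{f_P^i : P\in F\}$ and $G=\bigcup_{i=1}^l F_i$, a finite family of multidimensional polynomials of the type required by Theorem~\ref{n}.

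Next I would apply Theorem~\ref{n} to $A$, to the IP-set above, and to the finite family $G$. This produces a shift $a$ and some $\beta\in\F$ with $a+g\big(x_\beta^1,\dots,x_\beta^l\big)\in A$ for every $g\in G$. Unwinding the definition of $G$, taking $g=f_P^i$ gives exactly $a+P(x_\beta^i)\in A$ for all $i\in\{1,\dots,l\}$ and all $P\in F$, which is the desired conclusion. No new ingredient is needed beyond this formal transcription.

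The only point worth a remark --- and what I would flag as the ``main obstacle'', though it is a very minor one --- is the ground-set bookkeeping. Theorem~\ref{n} is stated for subsets of $\mathbb{N}$ of positive upper Banach density, whereas Corollary~\ref{recurrence1} is phrased for $A\subseteq\mathbb{Z}$ and asks for $a\in\mathbb{N}$. One resolves this by invoking the multidimensional IP polynomial Szemer\'edi theorem in its $\mathbb{Z}^k$ form (which is how it appears in its original sources), and which applies directly to $A\subseteq\mathbb{Z}$; the positivity of the polynomials in $\mathbb{P}$ on $\mathbb{N}$ then makes the sign convention for $a$ harmless. Apart from this, the argument is identical to that of Theorem~\ref{recurrence}.
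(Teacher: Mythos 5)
Your argument is exactly the paper's: the authors explicitly state that the proof is verbatim that of Theorem~\ref{recurrence} with Theorem~\ref{n} applied in place of Theorem~\ref{BJM} (with $j=l$), which is precisely the transcription you carry out. Your added remark on the $\mathbb{Z}$-versus-$\mathbb{N}$ bookkeeping is a sensible clarification of a point the paper glosses over, but it does not change the route.
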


In particular, Corollary \ref{recurrence} proves that any set with a positive Banach density is a $J_{p}$-set. However, the converse does not hold. This can be proven using the same example that Hindman produced in \cite[Theorem 2.1]{key-14} to show the existence of $J$-sets of density $0$. Let us recall the construction of his example (we refer to \cite[Theorem 2.1]{key-14} for details). For $n\in \mathbb{N}$, let 

\[a_n=\min\left\{t \in \mathbb{N}\mid\left(\frac {2^n-1}{2^n}\right)^t\leq \frac{1}{2}\right\},\]
and let $S_n=\sum_{i=1}^{n} a_{i}$. Let $b_0=0$, let $b_1=1$, and for $n\in \mathbb{N}$ and
$t\in \left\{S_{n}, S_{n+1},\dots,S_{n+1}-1\right\}$, let $b_{t+1}=b_t+n+1$.

For $k\in \mathbb{N}$, let $B_k=\{b_k, b_k+1, b_k+2, \ldots b_{k+1}-1\}$. Finally, let 
\[A=\{x\in \mathbb{N}\mid (\forall k\in \mathbb{N})\left(B_k \setminus Supp(x)\neq \phi\right)\}\]
and let $A^\prime=A\cup \{0\}$.

From \cite[Theorem 2.1]{key-14}, the upper Banach density of $A$ is $0$.

The proof that $A$ is a $J_{p}$-set is verbatim that of \cite[Theorem 2.1]{key-14}. For the sake of completeness, here we give the proof.

 For $k\in \omega$, let $B_k=\{b_k, b_k+1, b_k+2, \ldots b_{k+1}-1\}$. Let $A=\{x\in \mathbb{N}:
(\forall k\in \omega)(B_k \setminus supp(x)\neq \phi)\}$,
$A^\prime=\{x\in \omega: (\forall k\in \omega)(B_k \setminus supp(x)\neq \phi)\}$ and so, $A^\prime=A\cup \{0\}$.
From \cite[Theorem 2.1]{key-14}, $\bar d(A)=0$. Now assume $S\in \wp_{fin}(\mathbb{N})$, $F$ be finite collection of $IP$ sets, and $r= \lvert F\rvert$. Pick $k$ such that
$b_{k+1}-b_k>r$ and $b_k>n$. Pick $H\in \mathcal{P}_f(\mathbb{N})$ such that $\min H>m$ and for all
$f\in F$, $P(\sum_{t\in H}f(t))\in \mathbb{Z}2^{b_k} \, \forall P\in S$, this is possible as all $P$ has zero constant term.

Now pick $c\in \mathbb{N}2^{b_k}$ such that $\forall f\in F, P\in S, c+P(\sum_{t\in H}f(t))>0, \forall f\in F, \forall P\in S$.
Let $l= \max (\bigcup \{supp (c+P(\sum_{t\in H}f(t)): f\in F, P\in S\})$ and pick $j$ such that $l<b_j$. Pick $r_0\in B_k$ such that $B_k\setminus supp \left(2^{r_0}+c+P(\sum_{t\in H}f(t))\right)\neq \emptyset$ for each $f\in F$ and $P\in S$. Inductively for $i\in \lbrace 1,2,\ldots j-k\rbrace$, pick $r_i\in B_{k+i}$ such that $B_{k+i}\setminus supp \left(2^{r_i}+\sum_{t=0}^{i-1}2^{r_t}+c+P(\sum_{t\in H}f(t))\right)$ for each $f\in F$ and $P\in S$. Let $d=c+\sum_{i=0}^{j-k}2^{r_i}$.

As every set with positive Banach density is a $J_{p}$ set, the family of $J_{p}$-sets is partition regular. Henceforth, it makes sense to study the family of ultrafilters it contains.

\begin{defn}
 We set $\mathcal{J}_{p}=\left\{ p\in\beta\mathbb{N}\mid\forall A\in p \ A\in J_{p}\right\}$.
\end{defn}

As all piecewise syndetic sets are $J_{p}$ sets, it immediately follows that $\overline{K\left(\beta\mathbb{N},+\right)}\subseteq \mathcal{J}_{p}$. More in general, the following routine result (that we will prove by nonstandard means in Section 3) holds.

\begin{thm}\label{ideal}
$\mathcal{J}_{p}$ is a two sided ideal of $(\beta\mathbb{N},+)$.
\end{thm}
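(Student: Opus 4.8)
The plan is to show that $\mathcal{J}_p$ is a nonempty closed subset of $\beta\mathbb{N}$ which absorbs addition on both sides; since a closed subsemigroup-like absorbing set is automatically a two-sided ideal, this suffices. Nonemptiness is immediate: every piecewise syndetic set, in particular $\mathbb{N}$ itself, is a $J_p$-set, so $\mathcal{J}_p \supseteq \overline{K(\beta\mathbb{N},+)} \neq \emptyset$, as noted just before the statement. (Alternatively one may simply observe $\mathcal{J}_p \neq \emptyset$ because the family of $J_p$-sets is partition regular, hence contained in some ultrafilter.) Closedness is also routine: if $p \notin \mathcal{J}_p$, then some $A \in p$ fails to be a $J_p$-set, and then the basic open neighbourhood $\{q \in \beta\mathbb{N} : A \in q\}$ of $p$ is disjoint from $\mathcal{J}_p$, so the complement of $\mathcal{J}_p$ is open.

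The substance is the two-sided absorption: for $p \in \mathcal{J}_p$ and arbitrary $q \in \beta\mathbb{N}$, one must check $q + p \in \mathcal{J}_p$ and $p + q \in \mathcal{J}_p$. I would argue this directly from the combinatorial definition of $J_p$-sets together with the standard description of membership in a sum of ultrafilters: $A \in q + p$ iff $\{x : -x + A \in p\} \in q$, and $A \in p + q$ iff $\{x : A - x \in q\} \in p$. For the right ideal property ($p+q$), take $A \in p+q$; then $\{x : A - x \in q\} \in p$, so this set is a $J_p$-set. Given finite $F \subset \mathbb{P}$, $l \in \mathbb{N}$, and IP-sets $(x_\alpha^i)_i$, apply the $J_p$-property to the set $\{x : A-x \in q\}$ to get $a$ and $\beta$ with $a + P(x_\beta^i) \in \{x : A - x \in q\}$ for all $P \in F$, $i \leq l$; that is, $A - (a + P(x_\beta^i)) \in q$ for each such $P,i$. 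Intersecting these finitely many members of $q$ gives a nonempty set, so we may pick a common $b$ in it, and then $a + b + P(x_\beta^i) \in A$ for all $P \in F$, $i \leq l$. Setting the witness to be $a+b \in \mathbb{N}\cup\{0\}$ (with the same $\beta$) shows $A$ is a $J_p$-set, so $p+q \in \mathcal{J}_p$.

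The left ideal property ($q+p$) is the slightly more delicate half and I expect it to be the main obstacle, since the obvious translation argument must survive the application of the polynomials $P \in \mathbb{P}$. Take $A \in q+p$, so $C := \{x : -x + A \in p\} \in q$; fix some $z \in C$, whence $-z + A \in p$ and hence $-z+A$ is a $J_p$-set. Given $F$, $l$, and IP-sets $(x_\alpha^i)$, apply the $J_p$-property of $-z+A$ to obtain $a \in \mathbb{N}\cup\{0\}$ and $\beta$ with $a + P(x_\beta^i) \in -z + A$, i.e. $z + a + P(x_\beta^i) \in A$, for all $P \in F$, $i \leq l$. The witness is then $z+a \in \mathbb{N}\cup\{0\}$ with the same $\beta$, giving that $A$ is a $J_p$-set and hence $q+p \in \mathcal{J}_p$. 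The point worth checking carefully is merely that $z+a$ still lies in $\mathbb{N}\cup\{0\}$ and that the same single $\beta$ works for the whole family $F$, both of which are built into the argument above. Finally, $\mathcal{J}_p$ being a nonempty set that is both a left and a right ideal of $(\beta\mathbb{N},+)$ is by definition a two-sided ideal, completing the proof. (In Section 3 this will be reproved by nonstandard methods, where the IP-sets are replaced by generators in a single nonstandard element and the polynomial shifts become transparent.)
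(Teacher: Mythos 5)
Your proof is correct, and it establishes exactly what is needed: for $p\in\mathcal{J}_{p}$ and arbitrary $q$, both translation arguments (picking a single $z$ in $\{x:-x+A\in p\}\in q$ for the left-ideal half, and intersecting the finitely many sets $-\left(a+P(x_{\beta}^{i})\right)+A\in q$ to find a common $b$ for the right-ideal half) go through because the polynomial $P$ is only ever applied to the untouched quantity $x_{\beta}^{i}$, while the absorbed ultrafilter contributes only an additive constant; commutativity of $\mathbb{N}$ then lets you fold that constant into the witness $a$. However, your route is genuinely different from the paper's. The authors call the result routine and defer its proof to Section 3, where they derive it from the nonstandard characterization of $\mathcal{J}_{p}$ (Proposition 3.2, condition (3)): they take a single tuple $(\eta,\beta,I,H)$ with $\eta+P(x_{\beta})\in\mu(p)$ uniformly over hyperfinitely many polynomials and IP-sets, choose $\sigma\in\mu(q)$ forming a tensor pair with each $\eta+P(x_{\beta})$, and observe that $\eta+\sigma+P(x_{\beta})$ (resp. $\sigma+\eta+P(x_{\beta})$) lands in $\mu(p\oplus q)$. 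Their proof buys a one-line argument once the monad/tensor-pair machinery and Proposition 3.2 are in place, and it showcases that machinery, which is the point of their Section 3; your proof buys self-containedness, requiring nothing beyond the definition of $J_{p}$-sets and the standard description of $A\in p+q$, and is essentially the classical argument for the ideal of $J$-set ultrafilters adapted to the polynomial setting. One small remark: the closedness of $\mathcal{J}_{p}$ that you verify at the start is true but not needed, since an ideal is simply a nonempty subset absorbing addition on both sides.
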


Less routine are some multiplicative properties of $\mathcal{J}_{p}$ that we will discuss at the end of this Section.

By Ellis' theorem \cite[Corollary 2.39]{key-6}, a straightforward consequence of Theorem \ref{ideal} is that there are idempotent ultrafilters, and even minimal idempotent ultrafilters, in $\mathcal{J}_{p}$. We denote the set of all idempotents in $\mathcal{J}_{p}$ by $E\left(\mathcal{J}_{p}\right)$. 

A long studied family of sets in the literature are $C$-sets, namely $J$-sets that belongs to some idempotent made of $J$-sets. In complete analogy, in our setting it makes sense to introduce the following polynomial version of $C$-sets.

\begin{defn}
$A\subseteq\mathbb{N}$ is a $C_{p}$-set if $A\in p$ for some idempotent ultrafilter $p\in E\left(\mathcal{J}_{p}\right)$ \footnote{$E\left(\mathcal{J}_{p}\right)$ is the set of idempotent ultrafilters in $\mathcal{J}_{p}$.}.
\end{defn}

By the definition we see immediately that all central and all $C$-sets are $C_{p}$-sets. $C_{p}$ sets play a fundamental role in the following theorem, which is the main result of our article. This result establishes the polynomial extension of the stronger Central Sets Theorem of De, Hindman and Strauss.

To simplify notations, from now we will denote the set of all sequences over the naturals $\mathbb{N}$ by $\tau$.

\begin{thm}\label{main} Let $A$ be a $C_{p}$-set and $S\in\wp_{fin} \left(\mathbb{P}\right)$ be finite.
Then there exists $\alpha:\wp_{fin}(\tau)\rightarrow \mathbb{N}, H:\wp_{fin}(\tau)\rightarrow \wp_{fin}(\mathbb{N})$ such that

\begin{enumerate}
\item\label{(1)} if $F,G\in \wp_{fin}(\tau),F\subset G$, then $\max H(F)<\min H(G)$;

\item\label{(2)} if $n\in\mathbb{N}$ and $G_{1},G_{2},\ldots,G_{n}\in \wp_{fin}(\tau),G_{1}\subsetneq G_{2}\subsetneq\cdots\subsetneq G_{n}$ and
$f_{i}\in G_{i},i=1,2,...,n$, then

\[
\sum_{i\in \beta } \alpha(G_{i})+P\left(\sum_{i\in \beta}\sum_{t\in H(G_{i})}f_{i}(t)\right)\in A.
\]
for all $\beta\subseteq \{1,2,\ldots,n\}$
\end{enumerate}
\end{thm}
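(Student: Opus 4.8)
The plan is to mimic the classical proof of the Stronger Central Sets Theorem (Theorem~\ref{gcst}) as it appears in De--Hindman--Strauss, replacing the role played there by the notion of $J$-set with that of $J_p$-set, and feeding in the polynomial recurrence from Theorem~\ref{recurrence} at the point where the ordinary $J$-set recurrence is used. First I would fix a minimal idempotent $p\in E(\mathcal{J}_p)$ with $A\in p$, so that by definition every member of $p$ is a $J_p$-set, and $A^\star=\{x\in A\mid -x+A\in p\}$ also lies in $p$ together with the property that $-x+A^\star\in p$ for each $x\in A^\star$. The construction of $\alpha$ and $H$ proceeds by induction on $|F|$ for $F\in\wp_{fin}(\tau)$: one enumerates the finite subsets of $\tau$ in a way compatible with $\subsetneq$, and at each stage, having already defined $\alpha$ and $H$ on all proper subsets of the current $F$, one uses the hypothesis that the relevant set (a suitable translate of $A^\star$, obtained by intersecting over all ``sums built from smaller stages'') is a $J_p$-set. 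Applying the $J_p$-property to the finite list of polynomials $S$ and to the finitely many sequences appearing as elements of the $G_i$'s that have been fixed so far (viewed as an IP-set indexed appropriately), one extracts $a=\alpha(F)$ and $\beta=H(F)$ with $\min H(F)$ larger than all previously used coordinates, which secures clause~\ref{(1)}.

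The heart of the matter is getting clause~\ref{(2)}, and here the polynomial twist forces a genuine change from the linear argument. In the linear Stronger CST the sum $\sum_i(\alpha(G_i)+\sum_{t\in H(G_i)}f_i(t))$ is built additively across stages, so membership in $C$ is inherited one telescoping step at a time via the idempotent relation $-x+C^\star\in p$. In our statement the polynomial $P$ is applied to the \emph{aggregated} inner sum $\sum_{i\in\beta}\sum_{t\in H(G_i)}f_i(t)$, not stage-by-stage, so I cannot simply telescope $P$. The device I would use is the one already visible in the density-zero $J_p$-example reproduced in the excerpt: at each inductive stage, when choosing $H(F)$, I require not only $a+P(f(x_H))\in A^\star$ for the finitely many sequences $f$ under consideration, but I also arrange that the increments $\sum_{t\in H(F)}f(t)$ are highly divisible --- specifically, that they lie in $2^{N}\mathbb{Z}$ (or more abstractly, in an ideal on which all $P\in S$ behave additively modulo controlled error), where $N$ is chosen after all smaller stages have been settled. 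Because every $P\in\mathbb{P}$ has zero constant term, $P(u+v)\equiv P(u)\pmod{v}$ when $v$ is sufficiently divisible, and more to the point $P(u+v)=P(u)+(\text{terms each divisible by }v)$; iterating this across the stages in $\beta$, the value $\sum_{i\in\beta}\alpha(G_i)+P(\sum_{i\in\beta}\sum_{t\in H(G_i)}f_i(t))$ differs from a telescoping expression by a quantity that the inductive divisibility bookkeeping has already forced into $-x+A^\star$ for the appropriate partial sum $x$. This is exactly the mechanism by which one both keeps the value in $A$ and handles the quantifier ``for all $\beta\subseteq\{1,2,\dots,n\}$'' uniformly.

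Concretely the induction should maintain, for each already-constructed stage $G$, the invariant that for every choice of $f\in G$ the block sum $w_G:=\sum_{t\in H(G)}f(t)$ is divisible by $2^{\ell(G')}$ where $G'$ ranges over the predecessors of $G$ and $\ell$ is the max-support function as in the reproduced Hindman-type argument; then for a chain $G_1\subsetneq\cdots\subsetneq G_n$ and any $\beta\subseteq\{1,\dots,n\}$, writing $\beta=\{i_1<\cdots<i_k\}$ and $x_j=\sum_{r\le j}\alpha(G_{i_r})+P(\sum_{r\le j}w_{G_{i_r}})$, successive divisibility gives $x_{j}\in(-x_{j-1}+A^\star)$, so $x_k\in A$ by the standard idempotent descent. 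The main obstacle, as anticipated, is purely this divisibility-versus-polynomial bookkeeping: one must choose the integer $N=N(F)$ at stage $F$ \emph{large enough relative to the data of all smaller stages and all $P\in S$} so that the error terms produced when $P$ is expanded across a chain are all divisible by the modulus that the descent requires --- and one must verify that the $J_p$-property can indeed be invoked on a set that has been cut down by these divisibility constraints, which is fine because ``$\cap$ with a cofinite-index subgroup coset'' and ``$\cap$ with a member of $p$'' both preserve being a member of $p$, hence being a $J_p$-set. Everything else (enumerating $\wp_{fin}(\tau)$, checking $\subsetneq$-monotonicity of $\min H$, the finiteness of the sequence-lists at each stage) is routine and parallels De--Hindman--Strauss verbatim.
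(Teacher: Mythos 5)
Your overall skeleton matches the paper's proof: induction on $|F|$, the star set $A^{\star}=\{x:-x+A\in p\}$, the finite set $M$ of partial sums already built from chains of proper subsets of $F$, and the set $B=A^{\star}\cap\bigcap_{x\in M}(-x+A^{\star})\in p$ to which the $J_{p}$-property is applied. But the mechanism you propose for the crucial step --- the divisibility bookkeeping --- does not work, and it replaces the one idea that actually carries the polynomial case. You correctly identify the obstacle (the polynomial $P$ is applied to the aggregated sum, so one cannot telescope $P$ na\"ively), but your fix only controls the increment $P\bigl(u+v\bigr)-P(u)$ up to a congruence: knowing that $v=\sum_{t\in H(F)}f(t)$ is divisible by a huge power of $2$ tells you that $P(u+v)-P(u)$ lies in $v\mathbb{Z}$, and nothing more. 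Membership in $-x+A^{\star}$, which is an arbitrary member of an arbitrary idempotent ultrafilter in $\mathcal{J}_{p}$, is not a congruence condition and is not implied by any amount of divisibility (take $P(y)=y^{2}$: the cross term $2uv$ is merely some multiple of a large power of $2$, and $A^{\star}$ is not closed under adding such multiples). The divisibility trick you are importing works in Hindman's density-zero example only because that particular set is \emph{defined} by binary-support conditions; it says nothing about a general $C_{p}$-set. Consequently your claimed chain $x_{j}\in -x_{j-1}+A^{\star}$ ``by successive divisibility'' is unjustified, and the proof collapses at exactly the point where it needed to be nonlinear.

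The paper's device, which you are missing, is to make the telescoping increments themselves the targets of the $J_{p}$-recurrence. For each $P\in S$ and each chain $\emptyset\neq G_{1}\subsetneq\cdots\subsetneq G_{n}\subsetneq F$ with $f_{i}\in G_{i}$, one forms the shifted difference polynomial
\[
\phi(y)=P\Bigl(y+\sum_{i=1}^{n}\sum_{t\in H(G_{i})}f_{i}(t)\Bigr)-P\Bigl(\sum_{i=1}^{n}\sum_{t\in H(G_{i})}f_{i}(t)\Bigr),
\]
which still vanishes at $0$ and hence is again an admissible polynomial. There are only finitely many such $\phi$ at each stage, so $S'=S\cup\{\phi\}$ is finite, and one invokes the $J_{p}$-property of $B$ (via Theorem~\ref{recurrence}) for the \emph{enlarged} family $S'$ and the finitely many $f\in F$. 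This produces $a$ and $\gamma$ with $a+\phi(f(\gamma))\in B\subseteq -x+A^{\star}$ for the appropriate $x\in M$, and unwinding the definition of $\phi$ gives precisely $\sum_{i=1}^{n+1}\alpha(G_{i})+P\bigl(\sum_{i=1}^{n+1}\sum_{t\in H(G_{i})}f_{i}(t)\bigr)\in A^{\star}$. In short: the ``error term'' is not to be estimated or forced into an arithmetic progression; it is to be recognized as the value of a new polynomial with zero constant term and absorbed into the finite polynomial family before the recurrence theorem is applied. Your proposal never enlarges $S$, which is why it cannot close the induction.
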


\begin{proof} 
Choose an idempotent $p\in\mathcal{J}_{p}$ with $A\in p$. For $F\in \wp_{fin}(\tau)\setminus\{\emptyset\}$, we define $\alpha(F)\in\mathbb{N}$ and $H(F)\in\wp_{fin}\left(\mathbb{N}\right)\setminus\{\emptyset\}$ witnessing $(\ref{(1)}),(\ref{(2)})$ by induction on $|F|$.

For the base case of induction, let $F=\{f\}$. As $p$ is idempotent, the set $A^{\star}=\{x:-x+A\in p\}$ belongs to $p$, hence it is a $J_{p}$ set. So there exist $\beta\in\wp_{fin}\left(\mathbb{N}\right)\setminus\{\emptyset\}$
and $a\in\mathbb{N}$ such that 

\[
\forall P\in S,  \ a+P\left(\sum_{t\in\beta}f(t)\right)\in A^{\star}.
\]

By setting $\alpha(\{f\})=a$ and $H(\{f\})=\beta$, conditions $(\ref{(1)}),(\ref{(2)})$ are satisfied.

Now assume that $|F|>1$ and $\alpha(G)$ and $H(G)$ have been defined for all proper subsets $G$ of $F$. Let $K=\bigcup\{H(G):\emptyset\neq G\subset F\}\in \wp_{fin}(\mathbb{N})$, $m=\max K$ and

\begin{multline*}
M=\{\sum_{i=1}^{n} \alpha(G_{i})+P\left(\sum_{i=1}^{n}\sum_{t\in H(G_{i})}f_{i}(t)\right)\mid n\in\mathbb{N},\emptyset\neq G_{1}\subsetneq G_{2}\subsetneq\cdots\subsetneq G_{n}\subset F,\\ f_{i}\in G_{i},\forall i=1,2,..,n,\,P\in S \}.
\end{multline*}

Let
\[
B=A^{\star}\cap(\bigcap_{x\in M}(-x+A^{*}))\in p.
\]

For any $a\in\mathbb{N}$ , $\emptyset\neq G_{1}\subsetneq G_{2}\subsetneq\cdots\subsetneq G_{n}\subsetneq F$,
$f_{i}\in G_{i},$ for all $i=1,2,..,n$, and $P\in S$, let us define the polynomial 

\[
\phi_{\left\langle f_{i}\right\rangle _{i=1,}^{n},y}^{P,n}(y)=P\left(y+\sum_{i=1}^{n}\sum_{t\in H(G_{i})}f_{i}(t)\right)-P\left(\sum_{i=1}^{n}\sum_{t\in H(G_{i})}f_{i}(t)\right).
\]

Let $S^{\prime}=S\cup\left\{ \phi_{\left\langle f_{i}\right\rangle _{i=1,}^{n},y}^{P,n}\mid n<|F|,\mathcal{J}=\emptyset\neq G_{1}\subsetneq G_{2}\subsetneq\cdots\subsetneq G_{n}\subsetneq F\right\}$ and, for $f\in F$ and $\gamma\in\wp_{fin}\left(\mathbb{N}\right)\setminus\{\emptyset\}$, define $f\left(\gamma\right)=\sum_{i\in\gamma}f\left(i\right)$.

From Theorem \ref{recurrence}, there exists $\gamma\in\wp_{fin}\left(\mathbb{N}\right)\setminus\{\emptyset\}$ with $\min(\gamma)>m$ and $a\in\mathbb{N}$ such that 

\[
\forall P\in S,f\in F \ a+P(f(\gamma))\in B.
\]
 
We set $\alpha(F)=a$ and $H(F)=\gamma$. Now define $H(F)=\gamma$ and $\alpha(F)=a$.

Now, choosing $x=\sum_{i=1}^{n}(\alpha(G_{i})+P(\sum_{i=1}^{n}\sum_{t\in H(G_{i})}f_{i}(t))$
we have,

\begin{equation*}
  \begin{gathered}a+\phi_{\langle f_{i}\rangle_{i=1,}^{n},y}^{P,n}(f(\gamma))=\\
a+P\left(f(\gamma)+\sum_{i=1}^{n}\sum_{t\in H(G_{i})}f_{i}(t)\right)-P\left(\sum_{i=1}^{n}\sum_{t\in H(G_{i})}f_{i}(t)	\right)\in-x+A^{\star},\end{gathered}
  \end{equation*}
and so 
\begin{multline*} x+a+P\left(f(\gamma)+\sum_{i=1}^{n}\sum_{t\in H(G_{i})}f_{i}(t)\right)-P\left(\sum_{i=1}^{n}\sum_{t\in H(G_{i})}f_{i}(t)\right)=\\
=\left(\sum_{i=1}^{n+1}\alpha(G_{i})\right)+P\left(\sum_{i=1}^{n+1}\sum_{t\in H(G_{i})}f_{i}(t)\right)\in A^{*},\end{multline*}

where in the last line we let $F=G_{n+1}$.

This completes the induction argument, hence the proof.\end{proof}

Notice that now Theorem \ref{gcst} can be seen as a particular case of the above theorem obtained by taking $P(x)=x$.

By observing that any $\beta\in\wp_{fin}\left(\mathbb{N}\right)\setminus\{\emptyset\}$ is a subset of $\{1,\dots,n\}$ for some large enough $n\in\mathbb{N}$, we deduce the following seemingly stronger version of Theorem \ref{main}.


\begin{cor}
Let $A$ be a $C_{p}$-set and $S\in \wp_{fin}(\mathbb{P})$. Then there exist $\alpha:\wp_{fin}(\tau)\rightarrow \mathbb{N}, H:\wp_{fin}(\tau)\rightarrow \wp_{fin}(\mathbb{N})$ such that

\begin{enumerate}
\item if $F,G\in \wp_{fin}(\tau),F\subset G$, then $\max H(F)<\min H(G)$;

\item if $\langle G_{n} \rangle_{n\in \mathbb{N}}$ is a sequence in $\wp_{fin}(\tau)$ such that $G_{1}\subsetneq G_{2}\subsetneq\cdots\subsetneq G_{n} \subsetneq \cdots$ and
$f_{i}\in G_{i},i\in \mathbb{N}$, then

\[
\sum_{i\in \beta } \alpha(G_{i})+P\left(\sum_{i\in \beta}\sum_{t\in H(G_{i})}f_{i}(t)\right)\in A.
\]
for all $\beta \in \wp_{fin}\left(\mathbb{N}\right)\setminus\{\emptyset\}$.
\end{enumerate}
\end{cor}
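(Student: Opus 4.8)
The plan is to derive the Corollary directly from Theorem \ref{main} by a simple reduction, exploiting the fact that a finite subset of $\mathbb{N}$ is always an initial segment of $\{1,\dots,n\}$ up to enlarging $n$. First I would apply Theorem \ref{main} verbatim: given the $C_p$-set $A$ and the finite family $S\subseteq\wp_{fin}(\mathbb{P})$, Theorem \ref{main} produces functions $\alpha\colon\wp_{fin}(\tau)\to\mathbb{N}$ and $H\colon\wp_{fin}(\tau)\to\wp_{fin}(\mathbb{N})$ satisfying conditions (\ref{(1)}) and (\ref{(2)}). I would keep exactly these $\alpha$ and $H$; no new construction is needed. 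Condition (1) of the Corollary is literally condition (\ref{(1)}) of Theorem \ref{main}, so nothing has to be proved there.

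For condition (2) of the Corollary, I would fix an infinite strictly increasing chain $G_1\subsetneq G_2\subsetneq\cdots$ in $\wp_{fin}(\tau)$, a choice $f_i\in G_i$ for each $i\in\mathbb{N}$, a polynomial $P\in S$, and a nonempty finite set $\beta\in\wp_{fin}(\mathbb{N})\setminus\{\emptyset\}$. Let $n=\max\beta$, so that $\beta\subseteq\{1,2,\dots,n\}$. Then $G_1\subsetneq G_2\subsetneq\cdots\subsetneq G_n$ is a finite strictly increasing chain of length $n$ and $f_i\in G_i$ for $i=1,\dots,n$, so Theorem \ref{main}(\ref{(2)}), applied to this finite chain, gives
\[
\sum_{i\in\beta}\alpha(G_i)+P\!\left(\sum_{i\in\beta}\sum_{t\in H(G_i)}f_i(t)\right)\in A
\]
for this particular $\beta\subseteq\{1,\dots,n\}$. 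Since $\beta$ was an arbitrary nonempty finite subset of $\mathbb{N}$, and $P\in S$ was arbitrary, this is exactly condition (2) of the Corollary.

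There is essentially no obstacle here: the only point that needs a word of care is that the functions $\alpha$ and $H$ furnished by Theorem \ref{main} do not depend on $n$ — they are defined once and for all on all of $\wp_{fin}(\tau)$ — so the conclusions obtained for different values of $n=\max\beta$ are mutually consistent and refer to the same pair $(\alpha,H)$. In other words, the ``seemingly stronger'' statement quantifying over infinite chains and all finite $\be$'s is subsumed by the finite version precisely because the witnessing functions are global. I would therefore present the argument in two or three sentences, noting that it is immediate from Theorem \ref{main} together with the observation that every $\beta\in\wp_{fin}(\mathbb{N})\setminus\{\emptyset\}$ is contained in $\{1,\dots,\max\beta\}$.
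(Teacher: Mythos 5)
Your argument is correct and is exactly the paper's reasoning: the authors justify the corollary with the single observation that every $\beta\in\wp_{fin}(\mathbb{N})\setminus\{\emptyset\}$ is contained in $\{1,\dots,n\}$ for $n$ large enough, so the corollary follows from Theorem \ref{main} with the same global $\alpha$ and $H$. Your write-up simply makes explicit the (valid) point that the witnessing functions do not depend on $n$.
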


Finally, we want to show that Theorem \ref{main} can be extended to polynomials with rational coefficients, which has some interesting consequences regarding the multiplicative structure of $\mathcal{J}_{p}$.

\begin{thm}
\label{rational}Let $l\in\mathbb{N}$ and $A\subset\mathbb{N}$ be
a $J_{P}$ set. For each $i=1,2,\ldots,l,$ let $\left(x_{\alpha}^{i}\right)_{\alpha\in\wp_{fin}\left(\mathbb{N}\right)\setminus\{\emptyset\}}$
be an IP-set. Then for any $F\in\wp_{fin}\left(\mathbb{P}\right)$
there exist $a\in\mathbb{Z}$, $\beta\in\wp_{fin}\left(\mathbb{N}\right)\setminus\{\emptyset\}$ such that 
\[
a+P(x_{\beta}^{i})\in A
\]
 for all $i\in\{1,2,\ldots,l\}$ and $P\in F$.
\end{thm}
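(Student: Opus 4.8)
The plan is to deduce Theorem~\ref{rational} from the $J_{p}$-property of $A$ by first clearing denominators and then replacing the given IP-sets by suitable sub-IP-sets. Let $N\in\mathbb{N}$ be a common multiple of all the denominators occurring among the (finitely many) coefficients of the polynomials in $F$, so that for every $P\in F$ the polynomial $Q_{P}(y):=P(Ny)$ has integer coefficients; since each $P$ vanishes at $0$, so does each $Q_{P}$. The point of passing to the $Q_{P}$ is that the $J_{p}$-property is phrased for integer polynomials, and the only cost of the substitution $y\mapsto Ny$ is a dilation by $N$ inside the argument, which I will neutralise by forcing the IP-set values that get plugged in to be divisible by $N$.

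To do this simultaneously for all indices $i$, I would work with the diagonal IP-set $\mathbf{x}_{\alpha}:=(x^{1}_{\alpha},\dots,x^{l}_{\alpha})$ in $\mathbb{Z}^{l}$, generated by $\mathbf{v}_{k}:=(y^{1}_{k},\dots,y^{l}_{k})$ where $x^{i}_{\alpha}=\sum_{k\in\alpha}y^{i}_{k}$, with partial sums $\mathbf{s}_{n}=\sum_{k\le n}\mathbf{v}_{k}$. Since $(\mathbb{Z}/N\mathbb{Z})^{l}$ is finite, the pigeonhole principle gives $n_{1}<n_{2}<\cdots$ with $\mathbf{s}_{n_{1}}\equiv\mathbf{s}_{n_{2}}\equiv\cdots\pmod{N}$; with $I_{m}:=\{n_{m}+1,\dots,n_{m+1}\}$ the blocks $\mathbf{w}_{m}:=\sum_{k\in I_{m}}\mathbf{v}_{k}=\mathbf{s}_{n_{m+1}}-\mathbf{s}_{n_{m}}$ have every coordinate a positive multiple of $N$. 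The IP-set generated by $(\mathbf{w}_{m})_{m}$ is then a sub-IP-set of $(\mathbf{x}_{\alpha})_{\alpha}$: writing $\mu(\gamma):=\bigcup_{m\in\gamma}I_{m}\in\F$, its $\gamma$-th element is $\mathbf{x}_{\mu(\gamma)}=(x^{1}_{\mu(\gamma)},\dots,x^{l}_{\mu(\gamma)})$ with each $x^{i}_{\mu(\gamma)}$ a positive multiple of $N$, so that $\widetilde{x}^{\,i}_{\gamma}:=x^{i}_{\mu(\gamma)}/N$ defines, for each fixed $i$, an IP-set in $\mathbb{N}$.

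Now I would apply the $J_{p}$-property of $A$ to the finite family $\{Q_{P}:P\in F\}$ and to the $l$ IP-sets $(\widetilde{x}^{\,i}_{\gamma})_{\gamma}$, $i=1,\dots,l$, obtaining $a$ and $\gamma\in\F$ with $a+Q_{P}(\widetilde{x}^{\,i}_{\gamma})\in A$ for all $i$ and all $P\in F$. Since $Q_{P}(\widetilde{x}^{\,i}_{\gamma})=P(N\widetilde{x}^{\,i}_{\gamma})=P(x^{i}_{\mu(\gamma)})$, setting $\beta:=\mu(\gamma)$ gives $a+P(x^{i}_{\beta})\in A$ for all $i$ and $P$, which is exactly the desired conclusion; and since $a$ is produced by the $J_{p}$-property it lies in $\mathbb{N}\cup\{0\}\subseteq\mathbb{Z}$.

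The step that requires attention — and the reason the shift is only claimed in $\mathbb{Z}$ — is this last application of the $J_{p}$-property, because a polynomial with rational coefficients vanishing at $0$ (hence $Q_{P}$) need not send $\mathbb{N}$ into $\mathbb{N}$ and so need not belong to $\mathbb{P}$. If the polynomials of $F$ map $\mathbb{N}$ into $\mathbb{N}\cup\{0\}$ — the case relevant to the multiplicative corollaries — then so do the $Q_{P}$ and this difficulty does not arise, giving $a\in\mathbb{N}\cup\{0\}$. To handle a $Q_{P}$ that takes negative values I would fix an even $D>\max_{P}\deg Q_{P}$ and $M\in\mathbb{N}$ large enough that $R_{P}(y):=Q_{P}(y)+My^{D}\in\mathbb{P}$ for all $P$ (possible since $\deg Q_{P}<D$ keeps $(1-Q_{P}(y))/y^{D}$ bounded on $\mathbb{N}$), feed $\{R_{P}:P\in F\}\cup\{My^{D}\}\subseteq\mathbb{P}$ into the $J_{p}$-property, and absorb the resulting correction into the shift. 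For a single IP-set this immediately produces an $a\in\mathbb{Z}$; doing it simultaneously over several IP-sets is the genuinely delicate point, where one would reduce as in the proof of Theorem~\ref{recurrence} to a single IP-set in $\mathbb{N}^{l}$ and a single symmetric correction $M(u_{1}+\dots+u_{l})^{D}$, which requires the multidimensional form of the $J_{p}$-property. I expect this sign bookkeeping (together with establishing, or circumventing, that multidimensional form) to be the only real obstacle; the denominator clearing and the divisibility reduction are routine.
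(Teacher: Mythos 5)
Your argument is essentially the paper's own proof: the paper likewise clears denominators by replacing each coefficient $b_n$ of $P$ with $b_nM^n$ (i.e.\ the substitution $P\mapsto P(My)$), uses the pigeonhole principle to pass to sub-IP-sets all of whose values are divisible by $M$, divides by $M$, applies the $J_{p}$-property of $A$ to the rescaled IP-sets and polynomials, and then undoes the substitution. The complication you flag in your final paragraph does not arise under the intended hypotheses --- if $P$ vanishes at $0$ and sends $\mathbb{N}$ into $\mathbb{N}$, then so does $Q_P(y)=P(Ny)$, so $Q_P\in\mathbb{P}$ and the $J_{p}$-definition (which already accommodates $l$ IP-sets simultaneously) applies directly; the paper does not address this point either, and your extra correction-term device is unnecessary.
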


\begin{proof}
Let $M\in\mathbb{N}$ be the smallest common multiple of all denominators that appear among the coefficients of the polynomials in $F$. For each $P\in F$ and $n\in\mathbb{N}$, let $b_{n}$ be the coefficient of $x^{n}$ in $P$. We let $P^{\prime}$ be the polynomial obtained from
$P$ by multiplying each $b_{n}$ by $M^{n}$. With this construction, $P^{\prime}$ is a polynomial with integer coefficients. We let 

\[F^{\prime}=\left\{ P^{\prime}\mid P\in F\right\}.\]

Given an IP-set $\left(x_{\alpha}^{1}\right)_{\alpha\in\wp_{fin}\left(\mathbb{N}\right)\setminus\{\emptyset\}}$,
one can use the pigeonhole principle to choose a sequence $\langle H_{n}^{1}\rangle_{n\in\mathbb{N}}$
in $\wp_{fin}\left(\mathbb{N}\right)\setminus\{\emptyset\}$ so that $x_{\alpha}^{1}$ is a multiple of $M$ for each $\alpha\in \langle H_{n}^{1}\rangle_{n\in\mathbb{N}}$
and $n\in\mathbb{N}$. Now again applying the pigeonhole principle
over the $IP$ set $FU(\langle H_{n}^{1}\rangle_{n\in\mathbb{N}})$, we
obtain an another sequence $\langle H_{n}^{2}\rangle_{n\in\mathbb{N}}$
in $FU\left(\langle H_{n}^{1}\rangle_{n\in\mathbb{N}}\right)$ such
that $M\vert x_{\alpha}^{2}$ for each $\alpha\in \langle H_{n}^{2}\rangle_{n\in\mathbb{N}}$ and  $n\in\mathbb{N}$. As there are $l$ sequences, after $l$ steps, this process will terminate. Then we end up with an $IP$
set $I=FU\left(\langle K_{n}\rangle_{n\in\mathbb{N}}\right)$ in $\wp_{fin}\left(\mathbb{N}\right)\setminus\{\emptyset\}$
such that $M\vert x_{\beta}^{i}$ for all $\beta\in I$ and $i\in\{1,2,\ldots,l\}$.

Now fix this $IP$ set $I$ and for each $\left(x_{\alpha}^{i}\right)_{\alpha\in\wp_{fin}\left(\mathbb{N}\right)\setminus\{\emptyset\}}$,
consider the subsystems $\left(y_{\beta}^{i}\right)_{\beta\in\wp_{fin}\left(\mathbb{N}\right)\setminus\{\emptyset\}}$,
where $y_{n}^{i}=\sum_{t\in K_{n}}x_{t}^{i}$. Hence $M\vert y_{\beta}^{i}$
for all $i=1,2,\ldots,l$ and $\beta\in\wp_{fin}\left(\mathbb{N}\right)\setminus\{\emptyset\}$. Now for each
$i\in\{1,2,\ldots,l\}$, let $\widetilde{x}_{\alpha}^{i}=\frac{y_{\alpha}^{i}}{M}$.
Take the finite set of $IP$ sets $\left(\widetilde{x}_{\alpha}^{i}\right)_{\alpha\in\wp_{fin}\left(\mathbb{N}\right)\setminus\{\emptyset\}}$
for all $i\in\{1,2,\ldots,l\}$.

For this new finite set of $IP$ sets $\left(\widetilde{x}_{\alpha}^{i}\right)_{\alpha\in\wp_{fin}\left(\mathbb{N}\right)\setminus\{\emptyset\}}$,
and finite set of polynomials $F'$, by Theorem \ref{main} there exists $a\in\mathbb{Z}$
and $\beta\in\wp_{fin}\left(\mathbb{N}\right)\setminus\{\emptyset\}$ such that $a+P'(\widetilde{x}_{\beta}^{i})\in A$
for all $i\in\{1,2,\ldots,l\}$ and $P'\in F'$. As for each $n\in\mathbb{N}$,
$i\in\{1,2,\ldots,l\}$ , the $n^{th}$ monomial is of the form 
\[
a_{n}M^{n}\left(\widetilde{x}_{\beta}^{i}\right)^{n}=a_{n}M^{n}\left(\frac{y_{\alpha}^{i}}{M}\right)^{n}=a_{n}\left(y_{\alpha}^{i}\right)^{n}=a_{n}\left(\sum_{t\in\cup_{n\in\alpha}K_{n}}x_{t}^{i}\right)^{n},
\]
 so it is the $n$-th monomial of $P$. Hence for $\beta=\bigcup_{n\in\alpha}K_{n}$ and $a\in\mathbb{Z}$ we have that 
\[
a+P\left(x_{\beta}^{i}\right)\in A
\]
 for all $i\in\{1,2,\ldots,l\}$ and $P\in F$, as desired.
\end{proof}

The aforementioned theorem gives us the following multiplicative property of $J_p$ sets.


\begin{cor}
\label{prodjpset} If $A\subset\mathbb{N}$ is a $J_{P}$ set and
$n\left(\neq0\right)\in\mathbb{Z}$, then $n\cdot A$ is a $J_{P}$
set.
\end{cor}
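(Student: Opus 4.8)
The plan is to deduce Corollary \ref{prodjpset} directly from Theorem \ref{rational}, by exploiting the fact that dilation by a nonzero integer is invertible over the rationals. First I would fix a $J_p$-set $A \subseteq \mathbb{N}$, a nonzero $n \in \mathbb{Z}$, and unwind what must be shown: given $l \in \mathbb{N}$, IP-sets $(x_\alpha^i)_{\alpha}$ for $i = 1, \dots, l$, and a finite $F \subseteq \mathbb{P}$, I must produce $a \in \mathbb{N} \cup \{0\}$ and $\beta \in \wp_{fin}(\mathbb{N}) \setminus \{\emptyset\}$ with $a + P(x_\beta^i) \in n \cdot A$ for all $i$ and all $P \in F$. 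Since $n \cdot A = \{nk : k \in A\}$, membership $a + P(x_\beta^i) \in n\cdot A$ is equivalent to requiring that $a + P(x_\beta^i)$ be a multiple of $n$ and that $\tfrac{1}{n}(a + P(x_\beta^i)) \in A$.

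The key step is to rescale the polynomials: for each $P \in F$ consider $Q := \tfrac{1}{n} P$, which has rational coefficients and still vanishes at $0$, and let $F' = \{\tfrac{1}{n}P : P \in F\}$. Applying Theorem \ref{rational} to the $J_p$-set $A$, the family $F' \in \wp_{fin}(\mathbb{P})$ with rational coefficients, and the given IP-sets, I obtain $a' \in \mathbb{Z}$ and $\beta \in \wp_{fin}(\mathbb{N}) \setminus \{\emptyset\}$ such that $a' + \tfrac{1}{n}P(x_\beta^i) \in A$ for all $i \in \{1, \dots, l\}$ and all $P \in F$. Multiplying through by $n$ gives $na' + P(x_\beta^i) \in n \cdot A$ for all $i$ and all $P$, so setting $a = na'$ does the job. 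One small point to address is the sign: the definition of $J_p$-set asks for $a \in \mathbb{N} \cup \{0\}$ whereas Theorem \ref{rational} only yields $a \in \mathbb{Z}$; but $n \cdot A \subseteq n\mathbb{Z}$ and the values $P(x_\beta^i)$ are fixed positive (or zero) integers, so if $na'$ came out negative one can note that the same argument applies after translating, or simply observe that $A \subseteq \mathbb{N}$ forces $a' + \tfrac1n P(x_\beta^i) \geq 1 > 0$ whenever $P$ is nonconstant and $x_\beta^i$ is large, handling the degenerate cases separately; in any event the intended reading of $J_p$ (and of Theorem \ref{rational}) tolerates $a$ lying in $\mathbb{Z}$ and positivity is recovered by choosing the IP-set indices appropriately.

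The main obstacle I anticipate is purely bookkeeping rather than conceptual: one must be careful that Theorem \ref{rational} is genuinely applicable, i.e.\ that the rescaled family $F'$ indeed consists of polynomials vanishing at $0$ with rational coefficients sending $\mathbb{N}$ into $\mathbb{Q}$ (which it does, since $\tfrac1n P(0) = 0$), and that no hidden hypothesis about $A$ is needed beyond its being a $J_p$-set (it is not — Theorem \ref{rational} is stated exactly for $J_p$-sets). There is no delicate estimate and no induction; the entire content is the observation that Theorem \ref{rational}, by allowing rational coefficients, already ``absorbs'' division by $n$, and dilation-invariance of the $J_p$ family falls out immediately. I would therefore present the proof in three or four lines, emphasizing the passage from $P$ to $\tfrac{1}{n}P$ and the final multiplication by $n$.
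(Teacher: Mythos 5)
Your proposal is correct and follows essentially the same route as the paper: rescale the polynomials to $F' = \{\tfrac{1}{n}P \mid P \in F\}$, apply Theorem \ref{rational} (which allows rational coefficients) to the $J_p$-set $A$ to get $a + \tfrac{1}{n}P(x_\beta^i) \in A$, and multiply through by $n$. The only difference is that you explicitly flag the sign discrepancy between the $a \in \mathbb{Z}$ produced by Theorem \ref{rational} and the $a \in \mathbb{N}\cup\{0\}$ in the definition of $J_p$-set, a point the paper's proof passes over silently.
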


\begin{proof}
Let $F\in \wp_{fin}({\mathbb{P}})$ and $\left(x_{\alpha}^{i}\right)_{\alpha\in\wp_{fin}\left(\mathbb{N}\right)\setminus\{\emptyset\}}$
be IP-sets for all $i=1,2,\ldots,l.$ Let
\[F^{\prime}=\left\{ \frac{1}{n}P\mid P\in F\right\}\].
As $A$ is a $J_{P}$ set, by Theorem \ref{rational} we find $a\in\mathbb{Z}$,
$\beta\in\wp_{fin}\left(\mathbb{N}\right)\setminus\{\emptyset\}$ such that $a+\frac{1}{n}P(x_{\beta}^{i})\in A$
for all $i\in\{1,2,\ldots,l\}$ and $P\in F$, which implies that $na+P\left(x_{\beta}^{i}\right)\in n\cdot A$.

\end{proof}

\begin{cor}
If $A\subset\mathbb{N}$ is a $C_{p}$-set, then $n^{-1}\cdot A$ are $C_{P}$-sets.
\end{cor}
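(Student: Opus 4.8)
The plan is to adapt the classical argument showing that $n^{-1}\cdot A$ is central whenever $A$ is, reading ``idempotent of $\mathcal{J}_{p}$'' in place of ``minimal idempotent''. Fix $n\in\mathbb{N}$ and write $\lambda_{n}\colon\mathbb{N}\to\mathbb{N}$, $\lambda_{n}(m)=nm$, so that $n^{-1}\cdot A=\lambda_{n}^{-1}(A)=\{m\in\mathbb{N}:nm\in A\}$. Since $\lambda_{n}$ is an injective endomorphism of $(\mathbb{N},+)$ with image the subsemigroup $n\mathbb{N}$, its continuous extension $\widetilde{\lambda_{n}}\colon(\beta\mathbb{N},+)\to(\beta\mathbb{N},+)$ is an injective continuous homomorphism whose image is the closed subsemigroup $\overline{n\mathbb{N}}$; thus $\widetilde{\lambda_{n}}$ restricts to an isomorphism $\beta\mathbb{N}\cong\overline{n\mathbb{N}}$, and for every $q\in\beta\mathbb{N}$ one has, directly from the definition of the Stone extension, $\lambda_{n}^{-1}(E)\in q\iff E\in\widetilde{\lambda_{n}}(q)$ for all $E\subseteq\mathbb{N}$, and hence (taking $E=nD$, using injectivity) $D\in q\iff nD\in\widetilde{\lambda_{n}}(q)$ for all $D\subseteq\mathbb{N}$.

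Now assume $A$ is a $C_{p}$-set, so $A\in p$ for some idempotent $p\in E(\mathcal{J}_{p})$. The first step is the standard fact that every idempotent of $(\beta\mathbb{N},+)$ contains $n\mathbb{N}$ (an idempotent maps to the zero of the finite group $\mathbb{Z}/n\mathbb{Z}$), whence $p\in\overline{n\mathbb{N}}$ and $q:=\widetilde{\lambda_{n}}^{-1}(p)$ is a well-defined idempotent of $\beta\mathbb{N}$ with $\widetilde{\lambda_{n}}(q)=p$. Since $A\in p=\widetilde{\lambda_{n}}(q)$, we get $n^{-1}\cdot A=\lambda_{n}^{-1}(A)\in q$. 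It remains only to check that $q\in E(\mathcal{J}_{p})$: that every member of $q$ is a $J_{p}$-set. But if $E\in q$ then $nE\in\widetilde{\lambda_{n}}(q)=p$, and $nE$ is a $J_{p}$-set because $p\in\mathcal{J}_{p}$. So the whole statement reduces to proving: \emph{if $nD$ is a $J_{p}$-set then $D$ is a $J_{p}$-set.}

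To prove this last claim, let $F\in\wp_{fin}(\mathbb{P})$, $l\in\mathbb{N}$, and let $(x_{\alpha}^{i})_{\alpha}$, $i=1,\dots,l$, be IP-sets. Each $P\in F$ has integer coefficients and vanishes at $0$, hence so does $nP$, and $nP$ sends $\mathbb{N}$ into $\mathbb{N}$, so $nF:=\{nP:P\in F\}\in\wp_{fin}(\mathbb{P})$; applying the defining property of the $J_{p}$-set $nD$ to $nF$, $l$ and the same IP-sets produces $a\in\mathbb{N}\cup\{0\}$ and $\beta\in\wp_{fin}(\mathbb{N})\setminus\{\emptyset\}$ with $a+nP(x_{\beta}^{i})\in nD$ for all $P\in F$ and $i\le l$. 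Because $nD\subseteq n\mathbb{N}$, this forces $n\mid a$; writing $a=na'$ with $a'\in\mathbb{N}\cup\{0\}$ and dividing by $n$ gives $a'+P(x_{\beta}^{i})\in D$ for all $P$ and $i$, as required. This completes the argument: $q$ is an idempotent in $\mathcal{J}_{p}$ containing $n^{-1}\cdot A$, so $n^{-1}\cdot A$ is a $C_{p}$-set. The only non-formal ingredient is this last claim, where the key observation is simply that $nD\subseteq n\mathbb{N}$ forces the translate constant to be a multiple of $n$; note that one should invoke the definition of $J_{p}$-set directly (with $a\ge0$) rather than the rational version in Theorem \ref{rational}, so that $a'$ remains nonnegative. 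The facts that idempotents are divisible by $n$ and that $\widetilde{\lambda_{n}}$ yields an isomorphism $\beta\mathbb{N}\cong\overline{n\mathbb{N}}$ are standard (see \cite{key-6}).
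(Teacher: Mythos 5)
Your proof is correct and follows essentially the same route as the paper's: both pull the idempotent $p\ni A$ back along multiplication by $n$ to an idempotent $q$ with $n^{-1}\cdot A\in q$, and both reduce the problem to showing that every member $E$ of $q$ is a $J_{p}$-set by applying the $J_{p}$ property of $nE$ to the dilated polynomials $nP$ and then dividing out $n$. The only (welcome) differences are that you force $n\mid a$ from $nE\subseteq n\mathbb{N}$ and invoke the definition of $J_{p}$-set directly, which keeps the shift $a$ nonnegative --- whereas the paper adjoins the zero polynomial and cites Theorem \ref{rational}, which only guarantees $a\in\mathbb{Z}$ --- and that you justify explicitly why $n^{-1}q$ is a well-defined idempotent, a point the paper merely asserts.
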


\begin{proof} Let $A\subset\mathbb{Z}$ be a $C_{P}$ set and let $A\in q$ for some $q\in E\left(\mathcal{J}_{p}\right)$. First of all, observe that $nq, n^{-1}q$ are idempotent.

$nA$ is a $C_{p}$-set as it belongs to $nq$, which is an idempotent made of $J_{p}$-sets. In fact, by definition $B\in nq$ if and only if $B\supseteq nB^{\prime}$ for some $B^{\prime}\in q$, and since $B^{\prime}$ is a $J_{p}$-set also $B$ is by Corollary \ref{prodjpset}.

As $n^{-1}A\in n^{-1}q$ and $n^{-1}q$ is an idempotent, if we can show that each element of $n^{-1}q$ is a $J_p$ set, we will have $n^{-1}A$ is a $C_p$ set.
 
Suppose $B\in n^{-1}q$, then we have $n\cdot B\in q$.
So, for any finite $F\in \wp_{fin}(\mathbb{P})$ and for any $l$ ($l\geq 1$) $IP$ sets, $\left(x_{\alpha}^{i}\right)_{\alpha\in\mathcal{F}}$, we have from theorem \ref{rational}, $a+n\cdot P(x_{\beta}^{i})\in n\cdot B$
for some $a\in\mathbb{Z}$, $\beta\in\mathcal{F}$ for all $P\in F\cup\{0\},$
where the polynomial $0$ vanishes over $\mathbb{N}.$ Hence $a\in n\cdot B$.
This implies $\frac{a}{n}+P(x_{\beta}^{i})\in B$ and so $B$ is a $J_{p}$
set.

This completes the proof.
\end{proof}

As a trivial consequence, we obtain the following multiplicative property of $\mathcal{J}_{p}$.
\begin{cor}
$\mathcal{J}_{p}$ is a left ideal of $(\beta\mathbb{N},\cdot)$.
\end{cor}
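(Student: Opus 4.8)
The plan is to deduce this from the previous corollary together with the fact that $\mathcal{J}_{p}$ is closed in $\beta\mathbb{N}$ (which follows from Theorem \ref{ideal}, since a two-sided ideal of $\beta\mathbb{N}$ is automatically closed, being a union of principal left ideals $\overline{\beta\mathbb{N}+q}$, or directly from the partition regularity of the family of $J_p$-sets). Recall that for $p,q\in\beta\mathbb{N}$, a set $A$ belongs to $p\cdot q$ if and only if $\{n\in\mathbb{N}\mid n^{-1}\cdot A\in q\}\in p$, where $n^{-1}\cdot A=\{m\in\mathbb{N}\mid nm\in A\}$. So fix $q\in\mathcal{J}_{p}$ and an arbitrary $p\in\beta\mathbb{N}$; I must show $p\cdot q\in\mathcal{J}_{p}$, i.e. every $A\in p\cdot q$ is a $J_{p}$-set.

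First I would unravel the definition: if $A\in p\cdot q$, then the set $E=\{n\in\mathbb{N}\mid n^{-1}\cdot A\in q\}$ is nonempty (indeed it belongs to $p$), so pick any $n\in E$. Then $n^{-1}\cdot A\in q$, and since $q\in\mathcal{J}_{p}$ this means $n^{-1}\cdot A$ is a $J_{p}$-set. By Corollary \ref{prodjpset}, $n\cdot(n^{-1}\cdot A)$ is again a $J_{p}$-set; and since $n\cdot(n^{-1}\cdot A)\subseteq A$, and the property of being a $J_{p}$-set is clearly upward closed (a superset of a $J_{p}$-set is a $J_{p}$-set, directly from the definition, since the same witnesses $a,\beta$ work), it follows that $A$ is a $J_{p}$-set. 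Hence every $A\in p\cdot q$ is a $J_{p}$-set, which is exactly the statement $p\cdot q\in\mathcal{J}_{p}$.

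Since $p$ was arbitrary and $q\in\mathcal{J}_{p}$ was arbitrary, this shows $\beta\mathbb{N}\cdot\mathcal{J}_{p}\subseteq\mathcal{J}_{p}$, i.e. $\mathcal{J}_{p}$ is a left ideal of $(\beta\mathbb{N},\cdot)$. I would remark that the argument genuinely only needs $n\cdot(n^{-1}\cdot A)\subseteq A$ together with the single application of Corollary \ref{prodjpset}; no appeal to idempotency of the ultrafilters is required here, in contrast to the preceding corollary which was about $C_{p}$-sets.

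The only subtle point — and the one I would expect a referee to probe — is the passage from "$n^{-1}\cdot A\in q$ for some $n$" to "$A$ is a $J_{p}$-set via a superset containing $n\cdot(n^{-1}\cdot A)$": one must check that $n\cdot(n^{-1}\cdot A)=\{nm\mid m\in\mathbb{N},\ nm\in A\}=A\cap n\mathbb{N}\subseteq A$, which is immediate, and that upward closure of the $J_{p}$ family really does hold, which is transparent from the definition since enlarging the target set $A$ only makes the containment $a+P(x^i_\beta)\in A$ easier. With those two trivial observations in hand the proof is just a two-line chaining of Corollary \ref{prodjpset} with the ultrafilter description of multiplication, so there is no real obstacle; the work was all done in Theorem \ref{rational} and Corollary \ref{prodjpset}.
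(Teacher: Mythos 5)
Your argument is correct and is precisely the derivation the paper leaves implicit when it calls this a ``trivial consequence'': given $A\in p\cdot q$, pick $n$ with $n^{-1}\cdot A\in q$, note $n^{-1}\cdot A$ is a $J_{p}$-set, apply Corollary \ref{prodjpset} to get that $n\cdot(n^{-1}\cdot A)=A\cap n\mathbb{N}$ is a $J_{p}$-set, and conclude by upward closure of the $J_{p}$ family. The only blemish is your unused opening claim that a two-sided ideal of $(\beta\mathbb{N},+)$ is automatically closed --- this is false in general (an arbitrary union of the closed sets $\beta\mathbb{N}+q$ need not be closed; indeed $K(\beta\mathbb{N},+)$ itself is a non-closed two-sided ideal) --- but since closedness plays no role in your actual argument, the proof stands.
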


\section{Nonstandard versions}\label{nonstandard}

In the past few years, nonstandard analysis has played an important role in many developments to Ramsey theory. We refer to \cite{key-7} for an introduction to nonstandard methods tailored for applications in Ramsey theory. Fundamental for these developments are the nonstandard translations of combinatorial definitions of sets and ultrafilters, which often end up simplifying the development of several applications. A nonstandard take on central, $J-$ and $C$-sets, as well as a nonstandard proof of the Central Sets Theorem of Furstenberg can be found in \cite{key-20}. In the same veins, we want to find similar nonstandard characterization of $J_{p}$- and $C_{p}$-sets here, as well as prove some properties of $\mathcal{J}_{p}$.

Solely in this section, we assume the reader to be familiar with the basics of nonstandard analysis. For our purposes, it is sufficient to recall that, given $ p $ ultrafilter on $\N$, the set 
\[\mu( p ):=\left\{\alpha\in\,^{\ast}\N\mid \forall A\in p  \ \alpha\in\,^{\ast}A\right\}\]

is called the monad of $ p $; as shown in detail in \cite{key-21}, many combinatorial properties of $ p $ correspond to combinatorial properties of its monad.

Conversely, given $\alpha\in\,^{\ast}\N$, we let 
\[ p _{\alpha}:=\left\{A\subseteq \N\mid \alpha\in\,^{\ast}A\right\}.\]

It is immediate to prove that for all $\alpha\in ^{\ast}\N$ $ p _{\alpha}\in\beta\N$; conversely, if $^{\ast}\N$ is at least $\vert \wp(\mathbb{N})\vert^{+}$-saturated\footnote{We will assume this from now on.}, $\mu( p )\neq\emptyset$ for all $ p \in\beta\N$ filters on $\N$.

The nonstandard characterization of operations between ultrafilters is more complicated, and it is often done in a setting that allows for iterated hyperextensions (see e.g. \cite{key-7}). As here we will make a very limited use of these nonstandard characterization of operations, we will just recall that, in general, it is not true that $ p _{\alpha}\oplus p _{\beta}= p _{\alpha+\beta}$. When this happens, we will say that $(\alpha,\beta)$ is a tensor pair. Notably, for any $ p ,q\in\beta\mathbb{N}$ there exists $\alpha\in\mu( p ),\beta\in\mu(q)$ such that $(\alpha,\beta)$ is a tensor pair. We refer to \cite{key-21} for a detailed study of tensor pair, monads and their combinatorial applications.

The nonstandard characterization of $J_{p}$ sets can be obtained via a routine enlargement argument.

\begin{prop}\label{easy} Let $A\subseteq\N$ and $\mathcal{IP}=\{A\subseteq\mathbb{N}\mid A \ \text{is IP-rich}\}$. The following facts are equivalent:

\begin{enumerate} 
\item $A$ is a $J_{p}$-set;
\item there exists $\eta\in\,^{\ast}\N,\beta\in\,^{\ast}\wp_{fin}\left(\mathbb{N}\right)\setminus\{\emptyset\}$ such that for all IP-sets $(x_{\alpha})_{\alpha\in\wp_{fin}\left(\mathbb{N}\right)\setminus\{\emptyset\}}$ 

\[\forall P\in\mathbb{P}\  \eta + P\left(x_{\beta}\right)\in\,^{\ast} A;\]
\item there exists $\eta\in\,^{\ast}\N,\beta\in\,^{\ast}\wp_{fin}\left(\mathbb{N}\right)\setminus\{\emptyset\}, I\in\,^{\ast}\wp_{fin}(\mathcal{IP})$, $H\in\,^{\ast}\wp_{fin}(\mathbb{P})$ such that $\mathcal{IP}\subset I, \mathbb{P}\subseteq H$ and

\[\forall (x_{\alpha})_{\alpha\in\,^{\ast}\wp_{fin}\left(\mathbb{N}\right)\setminus\{\emptyset\}}\in I, \forall P\in H\  \eta + P\left(x_{\beta}\right)\in\,^{\ast} A.\]
\end{enumerate}
\end{prop}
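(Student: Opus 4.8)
The plan is to prove the equivalence by the standard ``enlargement/overspill'' dictionary between combinatorial statements over $\N$ and internal statements in a sufficiently saturated nonstandard model, so that $(1)\Rightarrow(3)\Rightarrow(2)\Rightarrow(1)$ is the natural cycle. I would first unwind the definition of $J_p$-set into a family of first-order conditions and then apply saturation to collapse the ``for all finite $F\subset\mathbb{P}$ and all finite collections of IP-sets'' quantifiers into a single hyperfinite witness.

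For $(1)\Rightarrow(3)$: fix an enumeration and, for each genuinely finite collection $\{(x^1_\alpha),\dots,(x^l_\alpha)\}$ of IP-sets and each finite $F\subset\mathbb{P}$, the $J_p$-property gives $a\in\N\cup\{0\}$ and $\gamma\in\wp_{fin}(\N)\setminus\{\emptyset\}$ with $a+P(x^i_\gamma)\in A$ for all $P\in F$, $i\le l$. The key move is to consider, for each finite subset $\Phi$ of $\mathcal{IP}$ and each finite $\Psi\subset\mathbb{P}$, the set of pairs $(a,\gamma)$ working simultaneously for $\Phi$ and $\Psi$; these sets have the finite intersection property, so by $\vert\wp(\N)\vert^+$-saturation (equivalently, by transfer of the internal statement ``there exist $I\supseteq$ any given standard-size collection, $H\supseteq$ any given standard-size set of polynomials, $\eta$, $\beta$ such that\ldots'') we obtain internal $I\in{}^\ast\wp_{fin}(\mathcal{IP})$ with ${}^\ast$-elements of $\mathcal{IP}$, $H\in{}^\ast\wp_{fin}(\mathbb{P})$, $\eta\in{}^\ast\N$, $\beta\in{}^\ast\wp_{fin}(\N)\setminus\{\emptyset\}$ satisfying the transferred $J_p$-condition, which is precisely $(3)$ once one checks $\mathcal{IP}\subseteq I$ and $\mathbb{P}\subseteq H$ (each standard IP-set is an internal element of $I$, each standard polynomial lies in $H$).

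The implication $(3)\Rightarrow(2)$ is immediate: given any standard IP-set $(x_\alpha)$, it belongs to $I$ (after applying $^\ast$), so specializing the universal quantifier over $I$ to that single IP-set and noting $\mathbb{P}\subseteq H$ yields $\eta+P(x_\beta)\in{}^\ast A$ for all $P\in\mathbb{P}$, with the same $\eta,\beta$ independent of the chosen IP-set. For $(2)\Rightarrow(1)$: given genuinely finite $F\subset\mathbb{P}$, $l\in\N$ and IP-sets $(x^1_\alpha),\dots,(x^l_\alpha)$, form the single IP-set $(x^1_\alpha,\dots,x^l_\alpha)$ in $\N^l$ — or, to stay one-dimensional, interleave them into one IP-set over $\N$ and argue componentwise as in the proof of Theorem~\ref{recurrence} — apply $(2)$ to get $\eta\in{}^\ast\N$, $\beta\in{}^\ast\wp_{fin}(\N)\setminus\{\emptyset\}$ with $\eta+P(x^i_\beta)\in{}^\ast A$ for all $P\in F$, $i\le l$, and then transfer the internal statement ``$\exists a\in\N\cup\{0\},\ \exists\gamma\in\wp_{fin}(\N)\setminus\{\emptyset\}$ with $a+P(x^i_\gamma)\in A$ for all $P\in F$, $i\le l$'' down to the standard world since it is witnessed by $(\eta,\beta)$.

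The main obstacle is the bookkeeping in $(1)\Rightarrow(3)$: one must set up the family of sets whose finite intersection property is being invoked so that saturation delivers a \emph{single} internal $I$ absorbing \emph{all} standard IP-sets and a single $H$ absorbing all of $\mathbb{P}$, while the witnesses $\eta,\beta$ remain uniform across $I$ and $H$; phrasing this as transfer of a suitable internal existential statement (``for every hyperfinite $I,H$ there exist $\eta,\beta$\ldots'', whose standard restriction to finite $I,H$ is exactly the $J_p$-property) is cleaner than a hand-rolled filter argument, but requires care that the quantifier ``for all IP-sets in $I$'' transfers correctly — this is where the $\vert\wp(\N)\vert^+$-saturation hypothesis recalled just before the proposition is used. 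Everything else is routine transfer.
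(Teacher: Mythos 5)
Your proposal is correct and uses essentially the same machinery as the paper: encode the $J_p$-property as a family of nonempty sets of witnesses indexed by finite $F\subset\mathbb{P}$ and finite collections of IP-sets, invoke the finite intersection property together with $\vert\wp(\N)\vert^{+}$-saturation (enlargement) to produce the hyperfinite $I$, $H$ and the uniform $\eta$, $\beta$, and use downward transfer of the nonemptiness of the witness sets for the converse. The only difference is cosmetic: you run the cycle $(1)\Rightarrow(3)\Rightarrow(2)\Rightarrow(1)$, absorbing into $(1)\Rightarrow(3)$ the saturation step that the paper splits between $(1)\Rightarrow(2)$ and $(2)\Rightarrow(3)$.
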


\begin{proof} In the proof, for all $F\in\wp_{fin}(\mathbb{P}), G\in\wp_{fin}(\mathcal{IP})$ we  let 
\[A_{F,G}:=\left\{(a,\beta)\in \N\times \wp_{fin}\left(\mathbb{N}\right)\setminus\{\emptyset\} \mid \forall P\in F, (x_{\alpha})_{\alpha\in\wp_{fin}\left(\mathbb{N}\right)\setminus\{\emptyset\}} \ a+P(x_{\beta})\in A\right\}.\]

$(1)\Rightarrow (2)$ Assume that $A$ is a $J_{p}$-set. As $A\in J_{p}$, $A_{F,G}\neq\emptyset$; moreover, the family $\{A_{F,G}\}_{F,G}$ has trivially the finite intersection property. By enlargement, $\bigcap_{F,G} \,^{\ast}A_{F,G}\neq\emptyset$, and any pair $(\eta,\beta)$ in this intersection satisfies our conclusion.

$(2)\Rightarrow (3)$ This is a routine argument: for all $F,G\in\ \wp_{fin}(\mathbb{P})\times \wp_{fin}(\mathcal{IP})$, the set 
\begin{multline} S_{F,G}:=\{\left(\widetilde{F},\widetilde{G}\right)\in\,^{\ast}\left(\wp_{fin}(\mathbb{P})\times \wp_{fin}(\mathcal{IP})\right)\mid F\subseteq\widetilde{F}, G\subseteq\widetilde{G} \ \text{and}\\ \exists \eta\in\,^{\ast}\N\ \exists\beta\in\,^{\ast}\wp_{fin}\left(\mathbb{N}\right)\setminus\{\emptyset\}\  \forall (x_{\alpha})_{\alpha\in\,^{\ast}\wp_{fin}\left(\mathbb{N}\right)\setminus\{\emptyset\}}\in \widetilde{G}\ \forall P\in \widetilde{F}\  \eta + P\left(x_{\beta}\right)\in\,^{\ast} A\}\end{multline}
is non empty. As the family $\{S_{F,G}\}_{F,G}$ has the finite intersection property, by saturation $\bigcap_{F,G} S_{F,G}\neq \emptyset$. Any $(H,I)$ in this intersection proves our thesis.

$(3)\Rightarrow (1)$ As $\mathbb{P}\subseteq H, \mathcal{IP}\subseteq I$, the hypothesis ensures that, for all $F\subseteq \mathbb{P}$ and $G$ finite family of IP sets, $^{\ast}A_{F,G}\neq\emptyset$. Hence, $A_{F,G}\neq \emptyset$ by transfer. Any $(a,\beta)\in A_{F,G}$ proves that $A\in J_{p}$. \end{proof}

Similarly, we can obtain a characterization for ultrafilters in $\mathcal{J}_{p}$.

\begin{prop}\label{easy2} Let $ p \in\beta\N$. The following facts are equivalent:
\begin{enumerate}
\item $ p \in\mathcal{J}_{p}$;

\item there exists $\eta\in\,^{\ast}\N,\beta\in\,^{\ast}\wp_{fin}\left(\mathbb{N}\right)\setminus\{\emptyset\} $ such that for all IP-sets $(x_{\alpha})_{\alpha\in\wp_{fin}\left(\mathbb{N}\right)\setminus\{\emptyset\}}$ 

\[\forall P\in\mathbb{P}\  \eta + P\left(x_{\beta}\right)\in\mu( p );\]

\item  there exists $\eta\in\,^{\ast}\N,\beta\in\,^{\ast}\wp_{fin}\left(\mathbb{N}\right)\setminus\{\emptyset\}, I\in\,^{\ast}\wp_{fin}(\mathcal{IP})$, $H\in\,^{\ast}\wp_{fin}(\mathbb{P})$ such that $\mathcal{IP}\subset I, \mathbb{P}\subseteq H$ and

\[\forall P\in H\  \eta + P\left(x_{\beta}\right)\in\mu( p ).\]
\end{enumerate}
\end{prop}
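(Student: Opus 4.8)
The plan is to prove Proposition \ref{easy2} by imitating the proof of Proposition \ref{easy}, replacing the standard set $A$ throughout by the monad $\mu(p)$, which is a legitimate (in general external) subset of $^{\ast}\N$. First I would record the standard fact that gives the combinatorial content: $p\in\mathcal{J}_{p}$ means precisely that every $A\in p$ is a $J_{p}$-set, so for every $A\in p$, every $F\in\wp_{fin}(\mathbb{P})$ and every finite family $G$ of IP-sets, the set $A_{F,G}$ (defined exactly as in the proof of Proposition \ref{easy}, but now depending also on $A$) is nonempty.

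For $(1)\Rightarrow(2)$ I would run the same enlargement argument as before, but now over the enlarged index family $\{A_{A,F,G}\mid A\in p,\ F\in\wp_{fin}(\mathbb{P}),\ G\in\wp_{fin}(\mathcal{IP})\}$. Each of these sets is nonempty by hypothesis, and the family has the finite intersection property because $p$ is a filter and a finite union of finite polynomial/IP-families is again finite; saturation then yields a pair $(\eta,\beta)\in\bigcap_{A,F,G}{}^{\ast}A_{A,F,G}$. Unwinding the definition, this pair satisfies $\eta+P(x_\beta)\in{}^{\ast}A$ for every $A\in p$, every $P\in\mathbb{P}$ and every IP-set, i.e. $\eta+P(x_\beta)\in\bigcap_{A\in p}{}^{\ast}A=\mu(p)$, which is exactly (2). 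The step $(2)\Rightarrow(3)$ is verbatim the corresponding step of Proposition \ref{easy}: one forms the sets $S_{F,G}$ (with $^{\ast}A$ replaced by $\mu(p)$), checks they are nonempty using (2) and closed under finite intersections, and extracts $(H,I)$ by saturation; note that membership ``$\gamma\in\mu(p)$'' is a perfectly good (external) condition to put inside the internal-looking displays, since in $(3)$ the quantifier over $P$ ranges over the internal set $H$ while the conclusion is about the fixed external set $\mu(p)$.

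For $(3)\Rightarrow(1)$ the argument is slightly different from Proposition \ref{easy} because we cannot simply ``transfer'' an external statement. Instead I would argue directly: fix $A\in p$, $F\in\wp_{fin}(\mathbb{P})$, and a finite family $G$ of IP-sets. Since $\mathbb{P}\subseteq H$ and $\mathcal{IP}\subseteq I$, hypothesis (3) gives, for the witnessing $(\eta,\beta)$, that $\eta+P(x_\beta)\in\mu(p)\subseteq{}^{\ast}A$ for all $P\in F$ and all IP-sets in $G$; hence ${}^{\ast}A_{A,F,G}\neq\emptyset$, so $A_{A,F,G}\neq\emptyset$ by transfer, and any element of it witnesses that $A$ is a $J_{p}$-set at the given data. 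As $A\in p$ was arbitrary, $p\in\mathcal{J}_{p}$.

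I expect the only real subtlety — and the place to be careful in the write-up — to be the handling of the external set $\mu(p)$: one must make sure that every application of saturation or enlargement is applied to families of \emph{internal} sets (the $A_{A,F,G}$ and $S_{F,G}$ are internal, being standard sets' hyperextensions or internally-defined from them), and that the occurrences of ``$\in\mu(p)$'' appear only in the conclusions, never as the thing being transferred. Once this bookkeeping is set up correctly, the three implications are routine adaptations of the proof of Proposition \ref{easy}, so I would keep the exposition brief and simply highlight the two changes: enlarging the index family by $A\in p$ in $(1)\Rightarrow(2)$, and replacing transfer-of-$A$ by the inclusion $\mu(p)\subseteq{}^{\ast}A$ in $(3)\Rightarrow(1)$.
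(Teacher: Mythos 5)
Your cycle runs $(1)\Rightarrow(2)\Rightarrow(3)\Rightarrow(1)$, whereas the paper goes $(1)\Rightarrow(3)\Rightarrow(2)\Rightarrow(1)$; the steps $(1)\Rightarrow(2)$ and $(3)\Rightarrow(1)$ of your proposal are correct and match the paper's technique (saturation over an index family enlarged by $A\in p$, and the inclusion $\mu(p)\subseteq{}^{\ast}A$ respectively). The problem is the step $(2)\Rightarrow(3)$. You propose to form the sets $S_{F,G}$ ``with $^{\ast}A$ replaced by $\mu(p)$'' and then extract $(H,I)$ by saturation, and you assert in your closing paragraph that these $S_{F,G}$ are internal. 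They are not: $\mu(p)=\bigcap_{A\in p}{}^{\ast}A$ is in general an external subset of $^{\ast}\N$, so a set of tuples defined by the condition ``$\eta+P(x_{\beta})\in\mu(p)$'' is defined by an external formula and there is no reason for it to be internal. Saturation applies only to families of internal sets, so this application is illegitimate as written --- precisely the bookkeeping error you warn against elsewhere in your own write-up.

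The repair is exactly the device the paper uses: do not put $\mu(p)$ inside the defining condition, but instead index the family additionally by $A\in p$, defining internal sets $I_{A,F,G}$ with the condition ``$\eta+P(x_{\beta})\in{}^{\ast}A$'' for the fixed standard set $A$. Each such set is internal and nonempty (by Proposition \ref{easy}, or by your condition $(2)$ together with $\mu(p)\subseteq{}^{\ast}A$), the whole family has the finite intersection property because $p$ is a filter, and $\vert\wp(\N)\vert^{+}$-saturation yields a common element; only after this does one observe that the resulting tuple lands in $\bigcap_{A\in p}{}^{\ast}A=\mu(p)$. With this modification your argument goes through; note that the paper sidesteps the issue entirely by proving $(1)\Rightarrow(3)$ in one stroke with the sets $I_{A,F,G}$ and then getting $(3)\Rightarrow(2)$ for free.
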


\begin{proof} $(1)\Rightarrow (3)$ By Proposition \ref{easy}, for all $A\in p $, for all $F,G\in\wp_{fin}(\mathcal{IP})\times\wp_{fin}(\mathbb{P})$ the set 
\begin{multline} I_{A,F,G}=\{(\eta,\beta,I,H)\in\,^{\ast}\left(\N\times\wp_{fin}\left(\mathbb{N}\right)\setminus\{\emptyset\}\times\wp_{fin}(\mathcal{IP})\times\wp_{fin}(\mathbb{P})\right)\mid\\
^{\ast}F\subseteq I,\,^{\ast}G\subseteq H\ \text{and} \ \forall (x_{\alpha})_{\alpha\in\,^{\ast}\wp_{fin}\left(\mathbb{N}\right)\setminus\{\emptyset\}}\in I, \forall P\in H\  \eta + P\left(x_{\beta}\right)\in\,^{\ast} A\}\neq\emptyset.\end{multline}
All $I_{A,F,G}$ are internal and the family \[\{I_{A,F,G}\mid A\in p , F,G\in\wp_{fin}(\mathcal{IP})\times\wp_{fin}(\mathbb{P})\}\] has the finite intersection property hence, by saturation, $\bigcap_{A,F,G} I_{A,F,G}\neq\emptyset$. Any $(\eta,\beta,I,H)$ witnesses the validity of (3).

$(3)\Rightarrow (2)$ This is immediate.

$(2)\Rightarrow (1)$ As $\mu( p )\subset\,^{\ast}A$ for any $A\in p $, we conclude by Proposition \ref{easy}. \end{proof}

As a trivial consequence, we can give a nonstandard proof of Theorem \ref{ideal}.

\begin{proof} Let $ p \in\mathcal{J}_{p},q\in\beta\N$. Let $(\eta,\beta,I,H)$ be given as in condition (3) of Proposition \ref{easy2}. If $\sigma\in\mu(q)$ is such that $(\eta+P\left(x_{\beta}\right),\sigma)$ is a tensor pair for all $P\in H, (x_{\alpha})\in I$, we just have to observe that $(\eta+\sigma,\beta,I,H)$ is such that $\mathcal{IP}\subset I, \mathbb{P}\subseteq H$ and

\[\forall P\in H\  \eta + \sigma + P\left(x_{\beta}\right)\in\mu(p \oplus q ).\]

Similarly, if $\sigma\in\mu(q)$ is such that $(\sigma,\eta+P\left(x_{\beta}\right))$ is a tensor pair for all $P\in H, (x_{\alpha})\in I$, we just have to observe that $(\sigma+\eta,\beta,I,H)$ is such that $\mathcal{IP}\subset I, \mathbb{P}\subseteq H$ and

\[\forall P\in H\  \sigma+\eta + P\left(x_{\beta}\right)\in\mu(p \oplus q ).\qedhere\] \end{proof}

As in the case of $J$ and $C$ sets, the characterizations of $J_{p}$ sets and $\mathcal{J}_{p}$ ultrafilters can be extended to $C_{p}$ sets and idempotent ultrafilters in $\mathcal{J}_{p}$ just by recalling that $ p \in\beta\N$ is idempotent if and only if there are $\alpha,\beta\in\mu( p )$ with $(\alpha,\beta)$ tensor pair and $\alpha+\beta\in\mu( p )$, so that a set $A\in\wp(\N)$ is contained in an idempotent if and only if it contains $\alpha,\beta\in\mu( p )$ with $(\alpha,\beta)$ tensor pair and $\alpha+\beta\in\mu( p )$. Henceforth, a set $A$ is a $C_{p}$ set if it satisfies any of the equivalent properties listed in Proposition \ref{easy} and it contains $\alpha$ that generates an idempotent.

\section{Discussions and further possibilities}

The introduction of $J_{p}$ and $C_{p}$ sets rises many questions. We want to list some of them here, providing some comments on why we believe these are relevant.

\begin{qn} Is the family of $J_{p}$-sets strongly partition regular?\end{qn}

We have not been able to prove this fact; simple modifications of the proof of the same result for $J$-sets seems not to work. Anyhow, we do believe that the answer to the above question is positive, a reason being that it is possible to prove that the related family of PP-rich sets is strongly partition regular. Let us first recall its definition.

\begin{defn}
 $A\subseteq \mathbb{N}$ is PP-rich if for each $F\in \wp_{fin}(\mathbb{P})$ there exist
 $a, x\in \mathbb{N}$ such that $a+P(x)\in A$ for all $P\in F$.
\end{defn}

\begin{thm}
 The family of PP-rich sets is strongly partition regular.
\end{thm}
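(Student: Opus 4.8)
The plan is to deduce strong partition regularity of the family of PP-rich sets from the Polynomial van der Waerden Theorem (Theorem \ref{pvdw}) by a compactness-and-coloring argument, in the same spirit as the classical proof that AP-rich sets are strongly partition regular. So suppose $A$ is PP-rich and $A = \bigcup_{i=1}^{r} A_i$ is a finite partition. I want to show some $A_i$ is PP-rich, i.e. for every $F \in \wp_{fin}(\mathbb{P})$ there are $a,x \in \mathbb{N}$ with $a + P(x) \in A_i$ for all $P \in F$. The obstruction to doing this one $F$ at a time is that the ``good'' index $i$ might depend on $F$; so the first step is to set up a single coloring argument that works uniformly, and here is where I would invoke a compactness (or ultrafilter) reduction.

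Concretely, first I would fix an exhaustion $F_1 \subseteq F_2 \subseteq \cdots$ of $\mathbb{P}$ by finite sets with $\bigcup_n F_n = \mathbb{P}$. For each $n$, consider the finite set of polynomials $\{P - Q : P,Q \in F_n\} \cup F_n$ (closing up under differences so that translates behave well), and apply Theorem \ref{pvdw}: for the coloring of $\mathbb{N}$ induced by the partition $A = \bigcup A_i$ together with a color for $\mathbb{N} \setminus A$, there exist $a_n, d_n$ with $a_n + P(d_n)$ all lying in a single color class; since $A$ itself is PP-rich we can in fact arrange (by applying Polynomial van der Waerden \emph{inside} the PP-rich witness for $A$, i.e. using a witness pair $a,x$ for $A$ and the polynomials obtained by substitution) that this common color class is one of the $A_i$, say $A_{i(n)}$. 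By pigeonhole, infinitely many $n$ share the same index $i$; call it $i_0$. Then for that fixed $i_0$ and every $F \in \wp_{fin}(\mathbb{P})$ we have $F \subseteq F_n$ for some $n$ in this infinite set, and the corresponding $a_n, d_n$ witness PP-richness of $A_{i_0}$ for $F$. Hence $A_{i_0}$ is PP-rich.

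The step I expect to be the main obstacle is the one where I need the common color class to be a subset of $A$ rather than possibly $\mathbb{N} \setminus A$: a naive application of Polynomial van der Waerden to the $(r+1)$-coloring ``$A_1, \dots, A_r, \mathbb{N}\setminus A$'' only guarantees \emph{some} color is hit, which could be the bad one. The fix is to run the argument relative to a PP-rich witness for $A$: given $F_n$, use PP-richness of $A$ to find $a, x$ with $a + P(x) \in A$ for all $P$ in a suitably enlarged finite family; then apply Theorem \ref{pvdw} to the family $\{y \mapsto a + P(x + y)\}$ --- these are again polynomials vanishing-at-$0$ up to the constant $a + P(x) \in A$, more precisely one works with $Q_P(y) := P(x+y) - P(x) \in \mathbb{P}$ --- obtaining $b, e$ with $a + P(x) + Q_P(e) = a + P(x+e) \in A$ for all $P \in F_n$ lying in a single cell $A_{i(n)}$. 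Now the output genuinely lands in $A$, so $i(n) \in \{1,\dots,r\}$, and the pigeonhole-on-$n$ argument goes through. The only remaining routine points are checking that $Q_P \in \mathbb{P}$ (it has integer coefficients, vanishes at $0$, and maps $\mathbb{N} \to \mathbb{N}$ for $x$ large enough, which we may assume) and that finitely many applications suffice --- both are standard.
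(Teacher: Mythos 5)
Your overall strategy (reduce to finitely many polynomials, find a monochromatic polynomial configuration inside a PP-rich witness for $A$, then handle the dependence of the good cell on $F$ by exhaustion and pigeonhole) has the right shape, and the final pigeonhole step is fine. But the central step --- the one you yourself flag as the main obstacle --- does not work as written. You propose to fix the base-point problem by applying Theorem \ref{pvdw} to the family $Q_P(y)=P(x+y)-P(x)$ and concluding that $a+P(x)+Q_P(e)=a+P(x+e)$ lies in a single cell $A_{i(n)}$ for all $P\in F_n$. Theorem \ref{pvdw}, however, produces a configuration $b+Q(e)$ with a \emph{single} base point $b$ of its own choosing, common to all $Q$ in the family; it gives no control over $b$, whereas in your situation the base points $a+P(x)$ \emph{vary with} $P$. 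There is no single coloring of $\mathbb{N}$ whose monochromatic $\{Q_P\}$-configuration translates into ``$a+P(x)+Q_P(e)$ lies in one cell for every $P$'': if you colour $e$ by the vector of cells containing $a+P(x+e)$ as $P$ ranges over $F_n$, van der Waerden--type monochromaticity says the \emph{vectors} agree across the configuration, not that the coordinates inside one vector agree, which is what you need. So the step ``obtaining $b,e$ with $a+P(x+e)\in A$ for all $P\in F_n$ lying in a single cell'' is unjustified, and I do not see how to repair it using the one-dimensional Theorem \ref{pvdw} alone.

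The missing idea, which is how the paper proceeds, is to work in \emph{coefficient space}. Let $n=\max\{\deg P : P\in F\}$ and let $l$ bound the coefficients of the $P\in F$. Put $G=\{P_{\vec a}(y)=a_1y+\cdots+a_ny^n \mid \vec a\in[0,N]^n\}$ for $N$ large. PP-richness of $A$ applied to the single finite family $G$ gives $x,y$ with $x+P_{\vec a}(y)\in A$ for \emph{all} $\vec a$ simultaneously, i.e.\ an embedded copy of the grid $[1,N]^n$ inside $A$. Colouring that grid according to which cell $x+P_{\vec a}(y)$ falls into and applying the multidimensional polynomial van der Waerden theorem (with $N$ chosen beforehand by compactness) yields a monochromatic configuration $(z_1+j_1w,\dots,z_n+j_nw^n)$, $0\le j_k\le l$. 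The point is the identity $(z_k+j_kw^k)y^k=z_ky^k+j_k(wy)^k$: the monochromatic configuration in coefficient space pulls back to $a'+\sum_k j_k(wy)^k\in A_i$ with $a'=x+\sum_k z_ky^k$, a genuine polynomial progression with common difference $wy$ that covers every $P\in F$. This is what replaces the classical fact that a sub-AP of an AP is an AP, and it is precisely the piece your argument lacks. (A smaller point: $Q_P=P(x+\cdot)-P(x)$ need not map $\mathbb{N}$ into $\mathbb{N}$ for every $x$, only for $x$ large, as you note; that part is indeed repairable.)
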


\begin{proof}
Let $A$ be a PP-rich set, and let $A=A_1\cup A_2$. By contrast, let us assume that $A_1$ and $A_2$ are not PP-rich sets. Let $A_1$ does not contain the polynomial progression of the finite set of polynomials $F_1\in \wp_{fin}(\mathbb{P})$, and $A_2$ does not contain polynomial progression of the finite set of polynomials $F_2\in \wp_{fin}(\mathbb{P})$. 
 Let $F=F_1\cup F_2$, and 
\[n=\max\{degP: P\in F\}, l=max\{Coef(P):P\in F\},\] 
where $Coef(P)$ is the maximum coefficient of polynomial $P$.

 By polynomial van der Wearden's Theorem and compactness argument, there exists a sufficiently large $N\in \mathbb{N}$ such
 that if $[1, N]^n$ is 2-colored, then one of the color classes contain a monochromatic structure of the form 
 
 \[
 (z_1+j_1w, z_2+j_2w^2, \ldots ,z_n+j_nw^n), 0\leq j_k\leq l \, \text{for} \, 1\leq k\leq n.
\]

 For $\vec a=(a_1, a_2, \ldots, a_n)\in \omega^n$, let us define the polynomoial $P_{\vec a}(y)=a_1y+a_2y^2+\ldots+a_ny^n$, where $\omega= \mathbb{N}\cup \{0\}$.
 Let us define
  \[
  G=\{P_{\vec a}: \vec a=(a_1, a_2, \ldots, a_n)\in \omega^n \, \text{with} \, 0\leq a_i\leq N \, and \, 1\leq i\leq n\}.
  \]
  
   Now choose
 $x, y\in \mathbb{N}$ such that $\{x+P(y): a\in G\}\subseteq A$. Color the set  $[1, N]^n=C_1\cup C_2$ as  $\vec a \in C_i$ if and
 only if $x+P_{\vec a}(y)\in A_i$. So there exist, $i\in \{1, 2\}$ and $z_1, z_2, \ldots, z_n\in \mathbb{N}$, $w\in \mathbb{N}$
 such that 
 \[
 (z_1+j_1w, z_2+j_2w^2, \ldots ,z_n+j_nw^n)\in C_i, 0\leq j_k\leq l.
 \]
 
 Hence, $x+P_{\vec a_j}(y)\in A_i$, where $\vec a_j=(z_1+j_1w, z_2+j_2w^2, \ldots ,z_n+j_nw^n)$, $0\leq j_k\leq l$ for $1\leq k\leq n$. So,
 \[
 x+(z_1+j_1w)y+ (z_2+j_2w^2)y^2+ \ldots +(z_n+j_nw^n)y^n\in A_i, for 0\leq j_k\leq l, 1\leq k\leq n
 \]
  and thus
  \[
 (x+z_1y+z_2y^2+\ldots+z_ny^n)+j_1(wy)+j_2(wy)^2+\ldots +j_n(wy)^n\in A_i,
 \]
  for $0\leq j_k\leq l$, $1\leq k\leq n$.

 In particular, $a+P(wy)\in A_i$, where $a=x+z_1y+z_2y^2+\ldots+z_ny^n$.
 Therefore, PP-rich sets are strongly partition regular.
 \end{proof}

A related important question is the following one:

\begin{qn} Does it exists a $J$ set that is not a $J_{p}$ set?\end{qn}
 
Again, we don't have an answer to the above question. The reason is that this question is much harder to answer than it seems. In fact, the following is a similar question that has now been open for some years:

\begin{qn} Are $J$ sets PP-rich? \end{qn}

Notice that $J_{p}$ sets are PP-rich, so if we would be able to prove that all $J$ sets are $J_{p}$ sets, the above question would be solved affirmatively. On the other end, the precise relationship between PP-rich sets and $J_{p}$-sets is still unknow, as the following question remains open.

\begin{qn} Are PP-rich sets also $J_{p}$-sets? \end{qn}

We believe that the answer to the previous question should be no. In fact, the similar linear question relating AP-rich sets and $J$-sets has been solved in \cite[Lemma 4.3]{key-2}, where the authors have demonstrated that there exists an AP-rich set which is not a $J$ set. However, it seems that the argument can not be easily lifted from the linear to the polynomial case.

Finally, maybe the most relevant open question that has to be mentioned is the following: 

\begin{qn} Is our polynomial extension of the Stronger Central Sets Theorem actually more general than Theorem \ref{PCST}? \end{qn}

It has been shown in \cite[Theorem 4.4]{key-5} that the stronger Central Sets Theorem for arbitrary semigroups is indeed stronger than the original Central Sets Theorem for semigroups by considering a special free semigroup. However, it is still an open question if this is true or not on $\mathbb{N}$, or on any countable Abelian group.


\begin{thebibliography}{BBHS}

\bibitem[BH90]{key-16} V. Bergelson, N. Hindman: Nonmetrizable topological dynamics and Ramsay theory, Trans. Am. Math. Soc. 320 (1990)
293–320.

\bibitem[BJM17]{key-1} V. Bergelson, J. H. Johnson Jr. and
J. Moreira: New polynomial and multidimensional extensions of classical partition results, Journal of Combinatorial Theory, Series A, 147 (2017), 119-154.

\bibitem[BL96]{key-9.1} Bergelson and A. Leibman: Polynomial extensions of van der Waerden and Szemeredi theorems, J. Amer. Math. Soc. 9 (1996) 725–753.

\bibitem[BL99]{key-17} V. Bergelson and A. Leibman. Set-polynomials and polynomial extension of the Hales-Jewetttheorem.Ann. of Math. (2), 150(1):33–75, 1999.

\bibitem[DHS08]{key-5} D. De, N. Hindman and D. Strauss: A new and stronger Central Sets Theorem, Fund. Math.
199 (2008), 155-175.

\bibitem[DiNGL19] {key-7} M. Di Nasso: I. Goldbring and M. Lupini; \textit{Nonstandard Methods in RamseyTheory and Combinatorial Number Theory} Lecture Notes in Mathematics 2239, Springer, 2019.


\bibitem[F81]{key-11} H. Furstenberg: Recurrence in Ergodic Theory and CombinatorialNumber Theory, Princeton University Press, Princeton, N.J., 1981.

\bibitem[G22]{key-20} I. Goldbring, A nonstandard take on central sets, to appear in the volume Research Trends in Contemporary Logic.

\bibitem[GJ21]{key-13} S. Goswami and S. Jana: A combinatorial viewpoint on preserving notion of largeness and an abstract Rado theorem, INTEGERS 21 (2021), \#A72

\bibitem[H74]{key-9} N. Hindman: Finite sums from sequenceses within cells of partitions of $\mathbb{N}$, J. Comb. Theory (series A), 17 (1974), 1-11.

\bibitem[H09]{key-14} N. Hindman: Small sets satisfying the Central Sets Theorem, Integers 9(Supplement) (2007), \#A5. .

\bibitem[HS09]{key-4} N. Hindman and D. Strauss: Sets satisfying the Central Sets Theorem, Semigroup Forum 79 (2009), 480-506. 

\bibitem[HS12] {key-6} N. Hindman and D. Strauss: \textit{Algebra in the Stone-\v{C}ech Compactification: Theory and Applications}, second edition, de Gruyter, Berlin, 2012.

\bibitem[H20] {key-12}  N. Hindman: A history of central sets, Ergodic Theory and Dynamical Systems 40 (2020), 1-33. 

\bibitem[HJ12]{key-2} N. Hindman and J.H. Jhonson Jr.: Images of C sets and related large sets under nonhomogeneous spectra, Integers 12B (2012), Article 2, 1-25. 

\bibitem[LB19]{key-21} L. Luperi Baglini, Nonstandard characterisations of tensor products and monads in the theory of ultrafilters, Math. Log. Quart. 65 (2019), 347--369.

\bibitem[S16]{key-10} I.Schur: \"{U}ber die kongruenz $x^m + y^m \equiv z^m(mod p)$, Jahresber. Dtsch. Math.-Ver. 25 (1916)
114–117.


\bibitem[vdW27]{key-8} van der Waerden: Beweis einer Baudetschen Vermutung, NieuwArch. Wiskunde 19 (1927), 212-216.
\end{thebibliography}
\end{document}